\numberwithin{equation}{section}
\numberwithin{figure}{section}
\newtheorem{thm}{Theorem}[section]
\newtheorem{prop}[thm]{Proposition}
\newtheorem{lem}[thm]{Lemma}
\newtheorem{coro}[thm]{Corollary}
\theoremstyle{definition}
\newtheorem{defx}[thm]{Definition}
\newtheorem{rem}[thm]{Remark}
\newcommand{\R}{I\!\!R}
\title{Groupoid Characterization of Locally Convex Partial $^*$-Algebras}
\author{{\emph{N. O. Okeke \;  Email: nickokeke@dui.edu.ng, }}
\\ Physical and Mathematical Sciences, Dominican University, Ibadan \\
\emph{M. E. Egwe \;  murphy.egwe@ui.edu.ng}
\\ Department of Mathematics, University of Ibadan, Ibadan, Nigeria}
\date{\vspace{-5ex}}
\begin{document}
\maketitle

\begin{abstract}
Given a locally convex space $(\mathcal{A},\tau)$ with a Hausdorff locally convex topology $\tau$ such that the following maps are continuous; $u \mapsto u^*$ for all $u \in \mathcal{A}$, $x \mapsto x\cdot y$ and $x \mapsto z\cdot x$ for every left and right multipliers of $\mathcal{A}$. In this paper we re-characterized the locally convex partial $*$-algebra $(\mathcal{A}, \Gamma,\cdot,*,\tau)$ arising from these continuous maps in terms of convolution algebra of a Lie groupoid $\Gamma \rightrightarrows \mathcal{A}$. This is advantageous because the pathologies of the underlying spaces owing to their quantum mechanical nature are easily resolved in groupoid terms.
\end{abstract}
\let\thefootnote\relax\footnote{\emph{Mathematics subject Classification (2010):} 22A22, 58H05, 47L60, 46A03 }
\let\thefootnote\relax\footnote{\emph{Key words and phrases:} Partial $^*$-algebras, Lie Groupoid, Local convexity, Groupoid equivalence, Unitary representation, $^*$-representation, Groupoid convolution algebra.}

\section*{Introduction}
The motivation for this work is basically to use the more conducive groupoid structure to recharacterize the locally convex partial $^*$-algebra which was characterized by Ekhaguere in \cite{Ekhaguere2007}. The re-characterization is basically through the relation $\Gamma$ defined by the partial multiplication $(\cdot)$ on a locally convex linear space $\mathcal{A}$, which is used to define a groupoid $\Gamma \rightrightarrows \mathcal{A}$, where the diagonal $\Delta = \{(x,x) \subset \mathcal{A} \times \mathcal{A}\}$ is the space of objects isomorphic to $\mathcal{A}$. The local convexity of the linear space $\mathcal{A}$ offers various advantages for the analysis of the resulting groupoid convolution algebra. These advantages, according to \cite{SchmedingWockel2014}, include among other things, the following:
\begin{itemize}
\item
 It is naturally compatible with the underlying topological framework;
\item
 It makes for automatic continuity of the smooth maps and differentials on the framework which has important geometric application, especially in the definition of system of left Haar measures for the groupoid;
\item
 It aids the construction of a Lie group structure on the set of bisections which is related and derivable from the canonical smooth structure on the manifold of mappings.
\end{itemize}
These accord with the fact that in analysis, unbounded operators frequently occur when symmetries are introduced using Lie groups. In this case the algebra arising from a smooth net $\mathscr{K}$, which is shown to be isomorphic to a Lie group (we sometime also denoted the smooth algebra with $\mathscr{K}$), carries a global aspect of the partial symmetry of the structure which complements with that of the relation $\Gamma$. Thus, the unbounded operators usually appear as open (unbounded) infinitesimal generators which are mostly differential operators of infinite dimensional Lie pseudogroups. We present the formulations of partial algebras in the sequel.

\section{Linear Spaces with Partial Algebra Structure}
Ekhaguere \cite{Ekhaguere2007} shows the abundance of unbounded operators by re-characterizing them in terms of linear subspaces open under a defined product operation. These open and linear subspaces are the natural representations of unbounded operators. Thus, unbounded operators manifest as open linear subspaces under a product operation. Among such open linear subspaces mentioned that is of much interest for this paper is the space of unbounded linear maps on locally convex topological vector spaces, with composition of maps as product operation. Our intention is to reformulate this partial $^*$-algebra as a groupoid convolution algebra.
\begin{defx}\cite{Ekhaguere2007}
A \emph{partial algebra} is a triplet $(\mathcal{A}, \Gamma, \cdot)$, comprising a linear space $\mathcal{A}$, a partial multiplication $\cdot$ on $\mathcal{A}$, and a relation given by $$\Gamma = \{(x,y) \in \mathcal{A} \times \mathcal{A} : x \cdot y \in \mathcal{A} \}$$ such that $(x,v), (x,z), (y,z) \in \Gamma$ implies $(x, \alpha v + \beta z), (\alpha x + \beta y, z) \in \Gamma$ and then $(\alpha x + \beta y)\cdot z = \alpha(x\cdot z) + \beta(y\cdot z)$ and $x\cdot (\alpha v + \beta z) = \alpha(x\circ v) + \beta(x \circ z)$ for all $\alpha, \beta \in \mathbb{C}$.
\end{defx}
\begin{prop}
The \emph{partial algebra} $(\mathcal{A}, \Gamma, \cdot)$ corresponds to the groupoid $\mathcal{G} = (\Gamma \rightrightarrows \mathcal{A})$ with arrows defined by the relation $\Gamma = \{(x,y) \in \mathcal{A} \times \mathcal{A} : x \cdot y \in \mathcal{A} \}$.
\end{prop}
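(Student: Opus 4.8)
The plan is to exhibit explicit groupoid data on $\Gamma$ over $\mathcal{A}$ and to verify the groupoid axioms, the point being that $\Gamma \rightrightarrows \mathcal{A}$ is realised as a wide subgroupoid of the pair groupoid $\mathcal{A}\times\mathcal{A}$. Concretely, I would take the unit space to be $\mathcal{A}$ (identified with the diagonal $\Delta$ via $x \mapsto (x,x)$), source and target maps $s(x,y) = y$, $t(x,y) = x$, unit section $1_x = (x,x)$, inversion $(x,y)^{-1} = (y,x)$, and partial composition $(x,y)\cdot(y,z) = (x,z)$ defined on the set of composable pairs $\Gamma^{(2)} = \{\big((a,b),(c,d)\big)\in\Gamma\times\Gamma : b = c\}$. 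With this choice every structural identity required of a groupoid — $s(gh)=s(h)$, $t(gh)=t(g)$, associativity $(gh)k = g(hk)$, the unit laws $1_{t(g)}\,g = g = g\,1_{s(g)}$, and the inverse laws $g^{-1}g = 1_{s(g)}$, $gg^{-1} = 1_{t(g)}$ — is inherited verbatim from the pair groupoid $\mathcal{A}\times\mathcal{A}$, so these amount to routine checks that I would only spot-verify.

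The substance of the proof is therefore the assertion that $\Gamma$, viewed as a relation on $\mathcal{A}$, is closed under these operations, i.e. that it is reflexive, symmetric and transitive. Reflexivity, $(x,x)\in\Gamma$ for all $x$, is what makes $\mathcal{A}$ (equivalently $\Delta$) the object space; here I would invoke that each $x$ occurs as both a left and a right multiplier in the sense of the standing hypotheses, so that $x\cdot x$ is defined and lies in $\mathcal{A}$ — alternatively one simply adjoins $\Delta$ as the unit space, since the identity morphisms of a groupoid need not be ``multiplication pairs'' a priori. Symmetry, $(x,y)\in\Gamma \Rightarrow (y,x)\in\Gamma$, is where the $*$-structure enters: from the compatibility $(x,y)\in\Gamma \iff (y^{*},x^{*})\in\Gamma$ together with $(x\cdot y)^{*} = y^{*}\cdot x^{*}$ and the fact that $*$ is an involution on $\mathcal{A}$, one transports $(x,y)$ to $(y^{*},x^{*})$ and then applies $*$ componentwise to land on $(y,x)$, with $y\cdot x = (x^{*}\cdot y^{*})^{*}\in\mathcal{A}$. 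This furnishes a well-defined inversion $\iota$ with $\iota^{2} = \mathrm{id}$.

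The hard part is transitivity, equivalently closure of $\Gamma$ under the candidate composition: given $(x,y),(y,z)\in\Gamma$ one must produce $x\cdot z\in\mathcal{A}$, so that $(x,z)\in\Gamma$ and $(x,y)\cdot(y,z) := (x,z)$ is legitimate. This does not follow from bilinearity alone; the argument has to use the (semi-)associativity of the partial product — relating $(x\cdot y)\cdot z$ and $x\cdot(y\cdot z)$ — together with the continuity and multiplier hypotheses on $\mathcal{A}$, and in the motivating example it is the observation that composing two unbounded maps whose relevant domains and ranges are matched again yields an admissible map. Should this closure fail for some partial algebras, the fallback is to pass to the largest sub-relation $\Gamma'\subseteq\Gamma$ that is reflexive, symmetric and transitive (its ``groupoid core'') and to state the correspondence for $\Gamma'$; I would flag this explicitly. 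Once transitivity is secured, the remaining bookkeeping — that $s,t$ intertwine composition, and that units and inverses behave as required — is immediate from the pair-groupoid formulas, and the identification $\mathcal{G}^{(0)}\cong\mathcal{A}$, $x\leftrightarrow(x,x)$, completes the correspondence.
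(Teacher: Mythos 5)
Your setup — realising $\Gamma$ as a subgroupoid of the pair groupoid $\mathcal{A}\times\mathcal{A}$ with $s(x,y)=y$, $t(x,y)=x$, units $(x,x)$, inversion $(x,y)\mapsto(y,x)$ and composition $(x,y)\cdot(y,z)=(x,z)$ — is exactly the paper's. Where you diverge is in what you then try to verify. You correctly identify that the real content of the claim is closure of $\Gamma$ under these operations, i.e.\ that $\Gamma$ is reflexive, symmetric and transitive; the paper's proof simply asserts ``it follows from the definition of the relation $\Gamma$ that $\Gamma$ is a subgroupoid of the pair groupoid'' and never addresses this. So you have located a gap in the statement itself that the paper glosses over. (The paper only returns to reflexivity much later, in Section 2, where it argues $(x,x)\in\Gamma$ for all $x$ from the locally convex algebra structure.)

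However, your attempted repairs do not go through under the stated hypotheses. The proposition concerns a bare partial algebra $(\mathcal{A},\Gamma,\cdot)$: the involution $^*$ is only introduced in the subsequent definition of a partial $^*$-algebra, so your symmetry argument via $(x,y)\mapsto(y^{*},x^{*})\mapsto(y,x)$ imports structure that is not available here. Transitivity you rightly call the hard part, but the partial algebra axioms (bilinearity of the domain $\Gamma$ and distributivity of $\cdot$ over it) say nothing about associativity or about composability of $x\cdot y$ with $z$; in the motivating example of unbounded operators, $x\cdot y$ and $y\cdot z$ being defined does not force $x\cdot z$ to be defined, so transitivity genuinely fails in general and your fallback to a ``groupoid core'' $\Gamma'$ changes the statement rather than proving it. Finally, you omit the one thing the paper's proof actually does establish: that for each $x$ and $z$ the fibres $\Gamma(x,\cdot)=\{y:(x,y)\in\Gamma\}$ and $\Gamma(\cdot,z)=\{x:(x,z)\in\Gamma\}$ are linear subspaces of $\mathcal{A}$, which is how the paper ties the groupoid picture (source and target fibres) to the partial algebra axioms. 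A complete write-up should include that verification, and should either add reflexivity, symmetry and transitivity of $\Gamma$ as explicit hypotheses or restrict the claim to the sub-relation on which they hold.
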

\begin{proof}
First, from the definition of the relation $\Gamma$ on $\mathcal{A}$, it follows that $\Gamma$ is a subgroupoid of groupoid of pairs $\mathcal{G} = \mathcal{A} \times \mathcal{A}$, and the maps are defined as follows. The arrows are defined by the relation $\Gamma = \mathcal{A}^{(2)} \subset \mathcal{A} \times \mathcal{A}$. \\ (i) The source and target maps $(t,s) : \Gamma \to \mathcal{A}$ are defined for any arrow $(x,y) \in \Gamma$ with $x\cdot y \in \mathcal{A}$, as $s(x,y) = y, t(x,y) = x$; \\ (ii) The objection map $o : \mathcal{A} \to \Delta \subset \Gamma$ is define as $x \mapsto (x,x)$; \\ (iii) The composition of arrows is partially defined, for $\circ : \Gamma \times \Gamma \to \Gamma,  (x,y)\circ (u,z) = (x,z)$ whenever $y = u$; \\ (iv) The inverse map is defined as $i : \Gamma \to \Gamma$ of an arrow $(x,y)$ is $(y,x)$.

Second, the groupoid satisfies the given linear conditions; for given $(x,v), (x,z), (y,z) \in \Gamma$, we have two linear subspaces defined by these arrows of the groupoid on $\mathcal{A}$ as follows $$\Gamma(x,\cdot) = \{y \in \mathcal{A} : (x,y) \in \Gamma\} \subset \mathcal{A}; \; \Gamma(\cdot,z) = \{x \in \mathcal{A} : (x,z) \in \Gamma\} \subset \mathcal{A}.$$ They are linear spaces, for given $(v,z) \in \Gamma(x,\cdot)$ and $(x,y) \in \Gamma(\cdot,z)$, it follows that $(x,\alpha v + \beta z) \in \Gamma$; which implies \[x \cdot (\alpha v + \beta z) = \alpha(x\cdot v) + \beta(x\cdot z) \in \mathcal{A} \] Thus, $\alpha(x,v) + \beta(x,z) \in \Gamma$. This shows that $\Gamma(x,\cdot)$ is a linear space.

Also, given $(\alpha x + \beta y, z) \in \Gamma(\cdot,z)$, by linearity we have $(\alpha x + \beta y)\cdot z = \alpha(x \cdot z) + \beta(y\cdot z) \in \mathcal{A}$. Thus, we have $\alpha(x,z) + \beta(y,z) \in \Gamma(\cdot,z)$ for all $\alpha, \beta \in \mathbb{K}$; showing also that $\Gamma(\cdot,z)$ is a linear space.
\end{proof}
\begin{rem}
The linear conditions imply that the subsets $\Gamma(x,\cdot), \Gamma(\cdot,z)$ of right and left \emph{multipliers} respectively, of any element of the linear space $\mathcal{A}$ are linear subspaces of $\mathcal{A}$ determined by the relation $\Gamma = \{(x,y) : x\cdot y \in \mathcal{A}\}$. In groupoid terms, every arrow $(x,y) \in \Gamma$ determines two linear subspaces $\Gamma(x,\cdot), \Gamma(\cdot,y)$ in $\mathcal{A}$. These subspaces are related to the source and target fibres as follows \[s^{-1}(y) = \Gamma_y = \{(u,y) \in \Gamma : u \in \Gamma(\cdot,y) \};\] \[t^{-1}(x) = \Gamma^x = \{(x,v) \in \Gamma : v \in \Gamma(x,\cdot)\}.\] Thus, the \emph{right multipliers} define the target fibre, while the \emph{left multipliers} define the source fibre. We extend this formulation to the definition of a partial $^*$-algebra in \cite{Ekhaguere2007} as follows.
\end{rem}
\begin{defx}
Given a partial algebra $(\mathcal{A},\Gamma, \cdot)$, a \emph{partial $^*$-algebra} or an \emph{involutive partial algebra} is a quadruplet $(\mathcal{A},\Gamma,\cdot,^*)$ such that $\mathcal{A}$ is an \emph{involutive} linear space with \emph{involution} $^*$, with $(y^*,x^*) \in \Gamma$ whenever $(x,y) \in \Gamma$ and then $(x\cdot y)^* = y^*\cdot x^*$.
\end{defx}
\begin{defx}
A \emph{partial subalgebra} (respectively \emph{partial $^*$-subalgebra}) is a subspace (respectively a $^*$-invariant subspace) $\mathcal{B}$ of $\mathcal{A}$ such that $x\cdot y \in \mathcal{B}$ whenever $x,y \in \mathcal{B}$ and $(x,y) \in \Gamma$.
\end{defx}
\begin{prop}
Given the partial $^*$-algebra $(\mathcal{A},\Gamma,\cdot,^*)$ as defined above, there is a corresponding groupoid $\Gamma \rightrightarrows \mathcal{A}$ defined by the equivalence relation $\Gamma = \{(x,y) \in \mathcal{A} \times \mathcal{A} : x\cdot y \in \mathcal{A}\}$ on $\mathcal{A}$, such that the compatibility of $\cdot$ and $^*$ in $\Gamma$ implies $(x\cdot y)^* = y^*\cdot x^*$.
\end{prop}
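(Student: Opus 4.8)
The plan is to use the groupoid $\Gamma\rightrightarrows\mathcal{A}$ already constructed in the Proposition above as the underlying object, and to show that the involution of $\mathcal{A}$ lifts to an involutive, composition-reversing anti-automorphism of this groupoid; the identity $(x\cdot y)^*=y^*\cdot x^*$ is then read off as the groupoid-level shadow of this lift. Thus the only new datum to produce is a map $\sigma$ on the arrow set $\Gamma$ implementing $^*$, and the substance of the proof is the verification that $\sigma$ is compatible with the source, target, object, composition and inversion maps of $\Gamma\rightrightarrows\mathcal{A}$.

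Concretely, I would proceed as follows. (i) Recall the structure maps from the Proposition above: $s(x,y)=y$, $t(x,y)=x$, $o(x)=(x,x)$, $(x,y)\circ(u,z)=(x,z)$ whenever $y=u$, and $i(x,y)=(y,x)$. (ii) Define $\sigma:\Gamma\to\Gamma$ by $\sigma(x,y)=(y^*,x^*)$; the fact that this lands in $\Gamma$, i.e.\ that $(y^*,x^*)\in\Gamma$ whenever $(x,y)\in\Gamma$, is exactly the defining axiom of a partial $^*$-algebra. (iii) Since $^*$ is an involution on the linear space $\mathcal{A}$, one gets $\sigma\circ\sigma=\mathrm{id}_\Gamma$, so $\sigma$ is a bijection, and, using the linearity of the fibres $\Gamma_y\cong\Gamma(\cdot,y)$ and $\Gamma^x\cong\Gamma(x,\cdot)$ established in the Proposition above, $\sigma$ is conjugate-linear on each fibre. (iv) Verify the compatibilities
\[ s\circ\sigma={}^*\!\circ t,\qquad t\circ\sigma={}^*\!\circ s,\qquad \sigma\circ o=o\circ{}^*,\qquad \sigma\circ i=i\circ\sigma, \]
so that $\sigma$ covers the object involution $x\mapsto x^*$ and, in particular, sends $\sigma$-fixed arrows to diagonal (self-adjoint) objects. (v) Check that $\sigma$ reverses composition: for composable $(x,y),(y,z)\in\Gamma$,
\[ \sigma\big((x,y)\circ(y,z)\big)=\sigma(x,z)=(z^*,x^*)=(z^*,y^*)\circ(y^*,x^*)=\sigma(y,z)\circ\sigma(x,y). \]

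Finally I would translate these equalities back into $\mathcal{A}$: interpreting the arrow $(x,y)$ as carrying the defined product $x\cdot y\in\mathcal{A}$ and $\sigma(x,y)=(y^*,x^*)$ as carrying $y^*\cdot x^*\in\mathcal{A}$, step (v) together with $\sigma\circ o=o\circ{}^*$ and the fibrewise conjugate-linearity give precisely $(x\cdot y)^*=y^*\cdot x^*$, so that $(\Gamma\rightrightarrows\mathcal{A},\sigma)$ is the asserted groupoid. I expect the main obstacle to be making the step~(v)-to-identity passage honest rather than tautological: one must be careful that the arrow $(x,y)$ records, through the partial multiplication, the element $x\cdot y$, and that $\sigma$ as defined is the unique lift of $^*$ that is simultaneously well defined on $\Gamma$ and compatible with $s$, $t$ and $o$; this is where the partial $^*$-algebra axioms are genuinely used. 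A secondary point deserving a remark is that closure of $\Gamma$ under the pair-groupoid composition — already needed in the Proposition above — is built into the standing hypotheses on $(\mathcal{A},\Gamma,\cdot,^*)$.
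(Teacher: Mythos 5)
Your proposal is correct, and at bottom it takes the same route as the paper: keep the groupoid $\Gamma\rightrightarrows\mathcal{A}$ already built from the partial algebra and simply adjoin the involution. The difference is one of substance versus assertion. The paper's entire proof is the single sentence that the result ``follows immediately from that of the partial algebra, with the addition of the involutive map $^*:\mathcal{A}\to\mathcal{A}$ compatible with the partial multiplication''; it never says what the involution does to arrows, nor checks any compatibility. You supply exactly the missing content: the lift $\sigma(x,y)=(y^*,x^*)$, whose well-definedness is precisely the axiom $(x,y)\in\Gamma\Rightarrow(y^*,x^*)\in\Gamma$, together with the verifications $s\circ\sigma={}^*\circ t$, $t\circ\sigma={}^*\circ s$, $\sigma\circ o=o\circ{}^*$, $\sigma\circ i=i\circ\sigma$ and the composition reversal in your step (v), all of which check out. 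Two points of comparison are worth recording. First, the paper's Remark following the proposition instead sets $(x,y)^*=(y,x)$, i.e.\ identifies the involution with the groupoid inverse $i$; that coincides with your $\sigma$ only when $^*$ acts trivially on $\mathcal{A}$, so your $\sigma=i\circ({}^*\times{}^*)$ is the more faithful (and more generally correct) lift. Second, you are right to flag that the passage from step (v) back to $(x\cdot y)^*=y^*\cdot x^*$ is essentially tautological: that identity is an axiom of the partial $^*$-algebra, so the proposition only repackages it in groupoid language --- which is, in effect, what the paper's one-line proof is admitting.
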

\begin{proof}
The proof follows immediately from that of the partial algebra, with the addition of the involutive map $^* : \mathcal{A} \to \mathcal{A}$ compatible with the partial multiplication and $\cdot : \Gamma \to \mathcal{A}$ defining the equivalence relation $\Gamma$.
\end{proof}
\begin{rem} We make the following remarks based on the formulation.\\
(1) We can define the involutive map $* : \mathcal{A} \to \mathcal{A}$ to be the inverse $i : \mathcal{A} \to \mathcal{A}.$ Then $x^* = x^{-1}$ and $(x\cdot y)^* = (x\cdot y)^{-1} = y^{-1}\cdot x^{-1} = y^*\cdot x^*$. Thus, $(x,y)^* = (y,x)$. \\
(2) The groupoid of pairs $\mathcal{G} = \mathcal{A} \times \mathcal{A}$ is also a partial algebra since it satisfies the relation. Thus, by the transitivity of the groupoid, it shows that every algebra is a partial $^*$-algebra, with inverse as involution. \\
(3) Units in the groupoid are of the form $(x,x),(y,y) \in \Gamma$, for they give identity arrows; and $(x,x)^* = (x,x)$.
\end{rem}
We extend the definition of left and right multipliers of an element above to left and right multipliers of the linear space $\mathcal{A}$ in terms of groupoid as follows.
\begin{defx}
We define the left and right \emph{multipliers} of $\mathcal{A}$ respectively as: \\  $\Gamma(\cdot,\mathcal{A}) = \{x \in \mathcal{A} : (x,u) \in \Gamma, \; \forall \; u \in \mathcal{A} \}$ \\ $\Gamma(\mathcal{A},\cdot) = \{y \in \mathcal{A} : (v,y) \in \Gamma, \; \forall \; v \in \mathcal{A}\}.$
\end{defx}
Since the source fibre $s^{-1}(y) = \Gamma_y$ is defined by the left multipliers of $y$, namely, $\Gamma(\cdot,y)$; we let $s^{-1}(y) = \Gamma(\cdot,y)$. Similarly, we let the target fibre $t^{-1}(x) = \Gamma(x,\cdot)$. Then for all $x \in \Gamma(\cdot,\mathcal{A}), y \in \Gamma(\mathcal{A},\cdot)$ respectively $\Gamma(x,\cdot) =  \Gamma(\cdot,y) = \mathcal{A}$.

From the formulation, it follows that for any other element $u \not \in \Gamma(\cdot,\mathcal{A})$ or $v \not \in \Gamma(\mathcal{A},\cdot)$, we have proper subsets $\Gamma(u,\cdot) \subset \mathcal{A}$ and $\Gamma(\cdot, v) \subset \mathcal{A}$. It is also true that $\mathcal{A}$ is invariant under the iteration of a left multiplier $x \in \Gamma(\cdot,\mathcal{A})$ (respectively a right multiplier $ y \in (\mathcal{A},\cdot)$). This gives rise to the following result on the ideal structure of the left and right multipliers.
\begin{prop}
Given the left $\Gamma(\cdot,\mathcal{A})$ and right $\Gamma(\mathcal{A},\cdot)$ multipliers of the linear space, their intersection $\Gamma(\cdot,\mathcal{A}) \cap \Gamma(\mathcal{A},\cdot)$ form an ideal of the partial $^*$-algebra $(\mathcal{A},\Gamma,\cdot,^*)$.
\end{prop}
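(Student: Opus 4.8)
The plan is to verify, for the set $M:=\Gamma(\cdot,\mathcal{A})\cap\Gamma(\mathcal{A},\cdot)$, the three defining properties of a $*$-ideal of the partial $*$-algebra $(\mathcal{A},\Gamma,\cdot,^*)$: that $M$ is a linear subspace of $\mathcal{A}$, that $M^*=M$, and that $M$ absorbs the partial product on both sides, i.e.\ $x\cdot a$ and $a\cdot x$ lie in $M$ for every $a\in M$ and $x\in\mathcal{A}$. Observe first that since an element $a\in M$ is simultaneously a universal left and a universal right multiplier, we have $(x,a)\in\Gamma$ and $(a,x)\in\Gamma$ for all $x\in\mathcal{A}$, so the products $x\cdot a$ and $a\cdot x$ are automatically defined and belong to $\mathcal{A}$; the absorption requirement is therefore purely a statement about which source and target fibres these products sit in.

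The first two properties are routine. For the subspace property, if $x,y\in\Gamma(\cdot,\mathcal{A})$ then $(x,u),(y,u)\in\Gamma$ for every $u\in\mathcal{A}$, and specialising the linearity axiom in the definition of a partial algebra (take $v=z=u$) gives $(\alpha x+\beta y,u)\in\Gamma$ for all $\alpha,\beta\in\mathbb{C}$, hence $\alpha x+\beta y\in\Gamma(\cdot,\mathcal{A})$; the mirror argument, using the other half of the linearity axiom, does the same for $\Gamma(\mathcal{A},\cdot)$, and therefore for the intersection $M$. For $*$-invariance, if $a\in\Gamma(\cdot,\mathcal{A})$ then $(a,u)\in\Gamma$ for all $u$, so the involution axiom gives $(u^*,a^*)\in\Gamma$ for all $u$; since $u\mapsto u^*$ is a bijection of $\mathcal{A}$, this says $(v,a^*)\in\Gamma$ for all $v$, i.e.\ $a^*\in\Gamma(\mathcal{A},\cdot)$. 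Symmetrically, $a\in\Gamma(\mathcal{A},\cdot)$ forces $a^*\in\Gamma(\cdot,\mathcal{A})$, so the involution interchanges the two multiplier spaces and consequently fixes $M$ as a set.

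For the absorption property it suffices, by the $*$-invariance just established, to treat one side: if $x\cdot a\in M$ for all $x\in\mathcal{A}$ and $a\in M$, then $a^*\cdot x^*=(x\cdot a)^*\in M^*=M$, and letting $x$ and $a$ range over $\mathcal{A}$ and $M$ yields $b\cdot y\in M$ for all $b\in M$ and $y\in\mathcal{A}$ as well. So fix $a\in M$ and $x\in\mathcal{A}$; I would show $x\cdot a$ is again a universal two-sided multiplier. That $x\cdot a\in\Gamma(\cdot,\mathcal{A})$ amounts to $(x\cdot a)\cdot w$ being defined for every $w$, and that $x\cdot a\in\Gamma(\mathcal{A},\cdot)$ amounts to $w\cdot(x\cdot a)$ being defined for every $w$; both are to be obtained from the invariance of $\mathcal{A}$ under iteration of the universal multiplier $a$ recorded in the remark preceding the statement, which is exactly what permits the products in question to be re-associated so that one factor is always $a$ and the remaining formation stays inside $\mathcal{A}$. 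Combined with $*$-invariance, this argument simultaneously shows that $M$ is closed under $\cdot$ with the product everywhere defined, so that $M$ is in fact an associative $*$-subalgebra of $\mathcal{A}$, and absorbing, hence a $*$-ideal.

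I expect the genuine obstacle to be concentrated in this last step. A partial $*$-algebra is not associative, so one is not entitled to write $(x\cdot a)\cdot w=x\cdot(a\cdot w)$ freely; the whole force of the argument has to come from $a$ being a universal multiplier, which is the content of the iteration-invariance assertion, and one must check with care that every intermediate product formed while re-associating is both defined and lands in the appropriate source or target fibre $\Gamma_w$ or $\Gamma^w$. The subspace and $*$-invariance parts, by contrast, are immediate consequences of the linearity and involution axioms and require no new input.
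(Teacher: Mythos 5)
Your handling of the subspace property and of the $*$-invariance of $M=\Gamma(\cdot,\mathcal{A})\cap\Gamma(\mathcal{A},\cdot)$ is correct and is actually more complete than the paper's own proof, which addresses neither point. In particular your observation that the involution interchanges the two multiplier spaces, so that it fixes the intersection without fixing either factor, is exactly right.

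The genuine gap is where you yourself located it: the absorption step. You reduce the claim that $x\cdot a$ is again a universal two-sided multiplier to the ``invariance of $\mathcal{A}$ under iteration of a universal multiplier'' stated in the remark before the proposition, but that remark is itself an unproved assertion, and the reduction cannot be completed from the axioms given. A partial algebra as defined here carries no associativity axiom of any kind, so nothing permits the passage from ``$a\cdot w$ is defined for all $w$'' to ``$(x\cdot a)\cdot w$ is defined for all $w$,'' nor any re-association that puts $a$ adjacent to $w$. In the standard theory of partial $*$-algebras the question of whether the universal multipliers form an algebra or an ideal requires additional hypotheses (semi-associativity or the like) and fails in general, so the missing lemma is not merely unstated but genuinely unavailable at this level of generality. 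For comparison, the paper's own proof is no stronger here: it asserts that $\Gamma(\cdot,\mathcal{A})$ is closed under right multiplication by $\mathcal{A}$ ``by definition,'' which is not what the definition says (the definition guarantees only that the products exist in $\mathcal{A}$, not that they land back in the multiplier space), and then stops. Your version at least isolates the unproved step; neither argument closes it.
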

\begin{proof}
By definition $\Gamma(\cdot,\mathcal{A})$ (respectively $\Gamma(\mathcal{A},\cdot)$) is closed (or invariant) under right (respectively left) multiplication by $\mathcal{A}$; that is, $\Gamma(\cdot,\mathcal{A}) \times \mathcal{A} \to \Gamma(\cdot,\mathcal{A})$ and $\mathcal{A} \times \Gamma(\mathcal{A},\cdot) \to \Gamma(\mathcal{A},\cdot)$. Thus, the restriction of the partial multiplication $\mathcal{A} \times \mathcal{A} \to \mathcal{A}$ to these subsets $\Gamma(\cdot,\mathcal{A}) \times \mathcal{A} \to \mathcal{A}$ and $\mathcal{A} \times \Gamma(\mathcal{A},\cdot) \to \mathcal{A}$ makes it a full multiplication or product.
\end{proof}
\begin{coro}
The left multipliers $\Gamma(\cdot,\mathcal{A})$ (respectively right $\Gamma(\mathcal{A},\cdot)$) is a left (respectively right) module of the partial $^*$-algebra $(\mathcal{A},\Gamma,\cdot,^*)$ or left (right) $\Gamma$-module.
\end{coro}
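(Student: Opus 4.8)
The plan is to read the corollary off the proof of the preceding Proposition 1.9, repackaging the two facts established there --- that $\Gamma(\cdot,\mathcal{A})$ is invariant under right multiplication by $\mathcal{A}$ and that the restriction of the partial multiplication to $\Gamma(\cdot,\mathcal{A})\times\mathcal{A}$ is then a full, everywhere-defined product --- together with the linearity built into Definition 1.1, into the axioms of a module. The involution $^*$ will afterwards convert the statement for left multipliers into the statement for right multipliers at no extra cost.

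First I would record that $\Gamma(\cdot,\mathcal{A})=\bigcap_{u\in\mathcal{A}}\Gamma(\cdot,u)$ and that each $\Gamma(\cdot,u)$ is a linear subspace of $\mathcal{A}$ (this is exactly what Proposition 1.2 and the Remark following it establish), so $\Gamma(\cdot,\mathcal{A})$ is a linear subspace of $\mathcal{A}$; dually $\Gamma(\mathcal{A},\cdot)=\bigcap_{v\in\mathcal{A}}\Gamma(v,\cdot)$ is a linear subspace. This gives the underlying $\mathbb{C}$-vector space of the module. Next I would exhibit the action $\Gamma(\cdot,\mathcal{A})\times\mathcal{A}\to\Gamma(\cdot,\mathcal{A})$, $(x,a)\mapsto x\cdot a$: the pair $(x,a)$ lies in $\Gamma$ because $x$ is a left multiplier, so $x\cdot a$ is defined, and it lies back in $\Gamma(\cdot,\mathcal{A})$ by the invariance noted in Proposition 1.9; bilinearity over $\mathbb{C}$ is immediate from the distributive laws $(\alpha x+\beta y)\cdot a=\alpha(x\cdot a)+\beta(y\cdot a)$ and $x\cdot(\alpha a+\beta b)=\alpha(x\cdot a)+\beta(x\cdot b)$ of Definition 1.1, every pair that occurs being in $\Gamma$ since its left entry is a left multiplier. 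Finally, since $(x,y)\in\Gamma$ iff $(y^*,x^*)\in\Gamma$, we have $x\in\Gamma(\cdot,\mathcal{A})$ iff $x^*\in\Gamma(\mathcal{A},\cdot)$, and $(x\cdot a)^*=a^*\cdot x^*$ carries the left action on $\Gamma(\cdot,\mathcal{A})$ to the right action $\mathcal{A}\times\Gamma(\mathcal{A},\cdot)\to\Gamma(\mathcal{A},\cdot)$, so the parenthesised (right) statement is a corollary of the left one.

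The delicate point --- and the step I expect to be the main obstacle --- is that the restricted product should be a genuine module action, that is, that $x\cdot a$ really lands in $\Gamma(\cdot,\mathcal{A})$ and that the associativity constraint $(x\cdot a)\cdot b=x\cdot(a\cdot b)$ holds whenever $(a,b)\in\Gamma$; neither is a formal consequence of the axioms in Definition 1.1 alone, since a partial $^*$-algebra need not be associative. I would settle both inside the groupoid $\Gamma\rightrightarrows\mathcal{A}$ of Proposition 1.2: under the identifications $s^{-1}(y)=\Gamma(\cdot,y)$ and $t^{-1}(x)=\Gamma(x,\cdot)$, the composability of arrows $(x,a)\circ(a,b)=(x,b)$ says precisely that triple products of multipliers are unambiguous and defined, so associativity of groupoid composition simultaneously gives the well-definedness of the action into $\Gamma(\cdot,\mathcal{A})$ and the module identity; in the motivating model of unbounded linear maps on a locally convex space with composition as partial multiplication these hold automatically on the relevant common domains. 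Assembling the linear-space structure, the everywhere-defined bilinear action, and this associativity yields the left $\Gamma$-module structure on $\Gamma(\cdot,\mathcal{A})$, and its $^*$-image is the right $\Gamma$-module structure on $\Gamma(\mathcal{A},\cdot)$.
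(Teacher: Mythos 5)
Your overall strategy is the same as the paper's: the corollary carries no proof of its own there, and is meant to be read off the preceding proposition exactly as you do, by repackaging the asserted invariance $\Gamma(\cdot,\mathcal{A})\times\mathcal{A}\to\Gamma(\cdot,\mathcal{A})$ and the distributive laws of Definition 1.1 into module axioms and then transporting the left statement to the right one by the involution. That part of your write-up (linear subspace structure via intersections of the $\Gamma(\cdot,u)$, bilinearity because every relevant pair lies in $\Gamma$, and the $^*$-duality $x\in\Gamma(\cdot,\mathcal{A})\iff x^*\in\Gamma(\mathcal{A},\cdot)$) is fine and is in fact more explicit than anything in the paper. You are also right to single out closure of the action and the identity $(x\cdot a)\cdot b=x\cdot(a\cdot b)$ as the only steps that are not formal consequences of the axioms.

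The gap is in how you propose to close that step. The groupoid $\Gamma\rightrightarrows\mathcal{A}$ of Proposition 1.2 is a subgroupoid of the pair groupoid on $\mathcal{A}$: an arrow is the pair $(x,a)$ itself, and the composition $(x,a)\circ(a,b)=(x,b)$ records only the composability relation; it carries no information about the \emph{value} $x\cdot a\in\mathcal{A}$ of the partial product. Consequently, associativity of arrow composition cannot show that the element $x\cdot a$ satisfies $(x\cdot a,u)\in\Gamma$ for every $u\in\mathcal{A}$ (which is what landing back in $\Gamma(\cdot,\mathcal{A})$ means), nor that $(x\cdot a)\cdot b=x\cdot(a\cdot b)$; at best it says that $(x,b)\in\Gamma$ whenever $(x,a),(a,b)\in\Gamma$, which is a statement about domains, not about products of products. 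In the general theory of partial $^*$-algebras these closure and associativity properties are genuinely false without extra hypotheses (this is why notions such as semi-associativity are introduced in Antoine--Inoue--Trapani), so the step cannot be recovered by a formal argument. To be fair, the paper itself asserts the closure ``by definition'' in the proposition without justification, so the gap is inherited rather than introduced by you; but your groupoid argument does not repair it, and an honest proof would either add an associativity-type hypothesis or verify the closure directly in the intended model of unbounded operators on common dense domains.
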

When the linear space $\mathcal{A}$ is locally convex with a Hausdorff locally convex topology $\tau$, a locally convex partial $^*$-algebra is defined by \cite{Ekhaguere2007} on $\mathcal{A}$ as follows.
\begin{defx}\cite{Ekhaguere2007}
A \emph{locally convex partial algebra} (respectively a \emph{locally convex partial $^*$-algebra}) is a quadruplet $(\mathcal{A},\Gamma,\cdot,\tau)$ (respectively a quintuplet $(\mathcal{A},\Gamma,\cdot,^*,\tau)$) comprising a partial algebra $(\mathcal{A},\Gamma,\cdot)$ (respectively a partial $^*$-algebra $(\mathcal{A},\Gamma,\cdot,^*)$) and a Hausdorff locally convex topology $\tau$ such that $(\mathcal{A},\tau)$ is a locally convex space and the maps $x \mapsto x\cdot y$ and $x \mapsto z\cdot x$ are continuous for every $y \in \Gamma(\mathcal{A},\cdot)$ and $z \in \Gamma(\cdot,\mathcal{A})$ (respectively the maps $u \mapsto u^*, x \mapsto x\cdot y$ and $x \mapsto z\cdot x$ are continuous for every $u \in \mathcal{A},\; y \in \Gamma(\mathcal{A},\cdot)$ and $z \in \Gamma(\cdot,\mathcal{A})$).
\end{defx}
To realize the locally convex partial $^*$-algebra in groupoid terms, we need the following definitions of the topological groupoid and locally convex topological groupoid. The latter is derived from a modification of the definition of locally convex Lie groupoid given in (\cite{SchmedingWockel2014}, 1.1).
\begin{defx}
A topological groupoid is a groupoid $\Gamma \rightrightarrows \mathcal{A}$ such that its set of morphisms $\Gamma$ and set of objects $\mathcal{A}$ are topological spaces, and its composition $m : \Gamma \times \Gamma \to \Gamma$, source and target $t,s : \Gamma \to \mathcal{A}$, objection $o : \mathcal{A} \to \Gamma$, and inversion $i : \Gamma \to \Gamma$ maps are continuous, with the induced topology on the set of composable arrows $\Gamma^{(2)}$ from $\Gamma \times \Gamma$.
\end{defx}
\begin{defx}\cite{SchmedingWockel2014}
Let $\mathcal{G} = (\Gamma \rightrightarrows \mathcal{A})$ be a groupoid over $\mathcal{A}$ with the source and target $t,s : \Gamma \to \mathcal{A}$ projections. Then $\mathcal{G}$ is a locally convex (and locally metrizable) topological groupoid over $\mathcal{A}$ if (i) $\mathcal{A}$ and $\Gamma$ are locally convex spaces; (ii) the topological structure of $\mathcal{G}$ makes $s$ and $t$ continuous; i.e. local projections; (iii) the partial composition $m : \Gamma \times_{s,t}\Gamma \to \Gamma$, objection $o : \mathcal{A} \to \Gamma$, and inversion $i : \Gamma \to \Gamma$ are continuous maps.
\end{defx}
\begin{prop}
The locally convex partial $^*$-algebra $(\mathcal{A},\Gamma,\cdot,^*,\tau)$ defined above gives rise to the locally convex groupoid $\Gamma \rightrightarrows \mathcal{A}$ such that the space of arrows  $\Gamma$ is given by the relation $\Gamma = \{(x,y) \in \mathcal{A} \times \mathcal{A} : x\cdot y \in \mathcal{A}\}$ on $\mathcal{A}$.
\end{prop}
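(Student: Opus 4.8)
The plan is to keep the combinatorial groupoid $\Gamma \rightrightarrows \mathcal{A}$ supplied by the algebraic propositions above, and to show that the Hausdorff locally convex topology $\tau$ promotes it to a locally convex topological groupoid in the sense of the definition modelled on (\cite{SchmedingWockel2014}, 1.1). First I would give $\mathcal{A} \times \mathcal{A}$ the product topology $\tau \times \tau$: a finite product of Hausdorff locally convex spaces is again Hausdorff and locally convex, and the arrow set $\Gamma \subseteq \mathcal{A} \times \mathcal{A}$ with the subspace topology inherits both properties, so condition (i) of that definition holds, $\mathcal{A}$ and $\Gamma$ being locally convex spaces. Here the object space is the diagonal $\Delta \subseteq \Gamma$ exhibited earlier, and under $x \mapsto (x,x)$ its subspace topology is exactly $\tau$, so no new topology is forced on the units.

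Next I would verify that each structure map is continuous by writing it as a restriction of, or a factorization through $\Gamma$ of, a visibly continuous map on the pair groupoid $\mathcal{A} \times \mathcal{A}$. The source and target $s(x,y) = y$ and $t(x,y) = x$ are the restrictions to $\Gamma$ of the coordinate projections $\pi_2, \pi_1 : \mathcal{A}\times\mathcal{A} \to \mathcal{A}$; these are continuous (and in fact open, which is what one wants later for a left Haar system), giving condition (ii). The objection $o : \mathcal{A} \to \Gamma$, $x \mapsto (x,x)$, is the factorization through $\Gamma$ of the diagonal map $\mathcal{A} \to \mathcal{A}\times\mathcal{A}$, whose components are $\mathrm{id}_{\mathcal{A}}$; it is continuous because a map into a subspace is continuous once it is continuous into the ambient space. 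The inversion $i : \Gamma \to \Gamma$, $(x,y) \mapsto (y,x)$, is the restriction of the flip homeomorphism of $\mathcal{A}\times\mathcal{A}$ and preserves $\Gamma$ since $\Gamma$ is symmetric; it is the arrow-level form of the involution, via $(x,y) \mapsto (y^*,x^*)$ and the earlier remark identifying $^*$ with $i$. Finally the partial composition $m : \Gamma \times_{s,t} \Gamma \to \Gamma$, $((x,y),(y,z)) \mapsto (x,z)$, is the restriction to the fibre product --- with its subspace topology inside $(\mathcal{A}\times\mathcal{A}) \times (\mathcal{A}\times\mathcal{A})$ --- of the continuous linear map $(a,b,c,d) \mapsto (a,d)$, and it lands in $\Gamma$ by the transitivity built into the groupoid structure above; this gives condition (iii). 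The continuity of $u \mapsto u^*$ on $\mathcal{A}$ assumed in the definition of a locally convex partial $^*$-algebra then transports to the induced continuous involution on $\Gamma$.

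The delicate point, and what I expect to be the main obstacle, is the exact role of the continuity hypotheses on the partial multiplication. The definition of a locally convex partial $^*$-algebra only asks the \emph{slice} maps $x \mapsto x\cdot y$ and $x \mapsto z\cdot x$ to be continuous, for $y \in \Gamma(\mathcal{A},\cdot)$ and $z \in \Gamma(\cdot,\mathcal{A})$; it does not give joint continuity of $\cdot : \Gamma \to \mathcal{A}$, and $\Gamma$ need not be closed in $\mathcal{A} \times \mathcal{A}$. The way around this is to note that none of the five structure maps $m, s, t, o, i$ involves the partial product --- $\cdot$ enters only through the \emph{definition} of the arrow set $\Gamma$ --- so the topological groupoid axioms are carried entirely by $\tau \times \tau$, while the separate continuity that is available is precisely what is needed downstream: it makes the restriction of $\cdot$ to each fibre $s^{-1}(y) = \Gamma(\cdot,y)$ and $t^{-1}(x) = \Gamma(x,\cdot)$ continuous, turning the fibration by left and right multipliers recorded in the remark above into a fibration by linear \emph{topological} subspaces, which is the input for the groupoid convolution algebra of the next section. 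Thus $\Gamma \rightrightarrows \mathcal{A}$ is a locally convex topological groupoid as claimed; if moreover $\tau$ is metrizable one also recovers the ``locally metrizable'' clause of (\cite{SchmedingWockel2014}, 1.1), and otherwise that clause is dropped, exactly as flagged before the definition.
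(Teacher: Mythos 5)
Your proposal is correct and follows essentially the same route as the paper: endow $\Gamma$ with the subspace topology induced from $\mathcal{A}\times\mathcal{A}$ and check that $s,t,o,i,m$ are continuous as restrictions of the obvious continuous maps on the pair groupoid. You supply the verification the paper only asserts, and your observation that the structure maps never invoke the partial product (so only separate continuity of the slice maps is needed, and only downstream) is a sharper account than the paper's appeal to ``the continuity of the partial multiplication defining the relation $\Gamma$.''
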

\begin{proof}
From the above definitions, given that $(\mathcal{A},\tau)$ is a Hausdorff locally convex topological space; the relation $\Gamma \subset \mathcal{A} \times \mathcal{A}$ has the subspace locally convex topology induced from $\mathcal{A} \times \mathcal{A}$. This follows from the continuity of the \emph{partial multiplication} $(\cdot)$ defining the relation $\Gamma$ which preserves local convexity. Also, the continuity of involution $^*$ is by definition; and the defining maps of the groupoid; the target and source maps $t,s : \Gamma \to \mathcal{A}$, the inverse $i : \Gamma \to \Gamma$, and the composition of arrows $m : \Gamma \times \Gamma \to \Gamma$ are all continuous maps. Thus, $\Gamma \rightrightarrows \mathcal{A}$ is a locally convex topological groupoid representing the locally convex partial $^*$-algebra $(\mathcal{A},\Gamma,\cdot,^*,\tau)$.
\end{proof}
In the following section we show that the locally convex topological groupoid $\Gamma \rightrightarrows \mathcal{A}$ is a Lie groupoid modelled on the locally convex topological vector space $\mathcal{A}$.

\section{The Lie Groupoid Modelled on $(\mathcal{A},\tau)$}
According to Hideki Omokri \cite{OmokriHideki97}, a Lie group is a group in which the infinitesimal neighbourhood of the identity element generates the connected component of the group containing the identity element. Since we are dealing with transformations of a linear space, the transformation generating the connected component (the Lie group) can be modified in such a way that any fixed element of the space being transformed can give the identity transformation. In the light of this possibility, we can modify the above description of a Lie group as a group in which the infinitesimal neighbourhood of any element generates the connected component of the group of transformations giving an identity transformation on the fixed element.

We are interested in such (connected) components $\mathscr{K}$ of a transformation group which is generated on an open subspace that is dense in the original locally convex linear space $\mathcal{A}$. The corresponding infinitesimals are properly the unbounded operators on the linear space. Their algebras are the partial and partial $^*$-algebras. Hence, we have to consider the open subspace of right multipliers or the target fibre $\Gamma(x,-)$ (or the source fibre $\Gamma(-,x)$) of the groupoid we have constructed from the relation $\Gamma$ on $\mathcal{A}$. As we have noted above, these subspaces are maximal when $x \in \Gamma(-,\mathcal{A})$ (respectively $x \in \Gamma(\mathcal{A},-)$.

Furthermore, 'to generate' a component, as noted by Omokri (\cite{OmokriHideki97}, p.1), implies various means of 'integrating' an infinitesimal quantity to give a finite quantity. These various means may be solving an ordinary differential equation, solving a partial differential equation of evolution, product integral, or Feynman path integral. The task therefore is to define the infinitesimal neighbourhood of identity (or of any element) in the transformations of the locally convex Hausdorff topological space $\mathcal{A}$ defined by the partial multiplication ($\cdot$) giving rise to the relation $\Gamma$.

Since the space is locally convex, we employ Alain Connes' technique for a smooth manifold as in \cite{Connes94}; whereby every point $x$ in the manifold $M$ is identified with a convex neighbourhood $(x,\varepsilon)$, where $\varepsilon \in [0,1)$. The convex neighbourhood $(x,\varepsilon) \in M \times [0,1)$ is then used to relate the points to the infinitesimal generators acting at $x$, as element of the tangent space $T_xM$ and the tangent bundle $TM$.

In this case, the pair $(x,\varepsilon) \in \mathcal{A}\times (0,1)$ can be said to represent an open convex neighbourhood of each point $x \in \mathcal{A}$. This follows because the convexity of a set $A \subset \mathcal{A}$ implies that it is invariant under a homothetic transformation centred at any point $a \in A$ with ratio $\varepsilon \in (0,1)$. (cf. \cite{Bourbaki81},II,$\S$ 2). Hence, by local convexity of $\mathcal{A}$, we have that $(x,\varepsilon) \in \mathcal{A} \times [0,1)$ encodes open convex neighbourhoods of every point $x$ in $\mathcal{A}$.

Next, we extended this construction to the groupoid $\Gamma \rightrightarrows \mathcal{A}$ determined by the relation $\Gamma = \{(x,y) \in \mathcal{A} \times \mathcal{A} : x\cdot y \in \mathcal{A}\}$. The set of units which is the image of the objection map $o : \mathcal{A} \to \Gamma^o = \{(x,x) \in \Delta \subset \mathcal{A} \times \mathcal{A}\} \simeq \mathcal{A}$ is locally convex. The local convexity of $\mathcal{A}$ and the fact that $(\mathcal{A},\Gamma,\cdot,^*,\tau)$ is a locally convex partial $^*$-algebra assures $(x,x) \in \Gamma, \; \forall \; x \in \mathcal{A}$, since according to \cite{DierolfHeintz}, a locally convex topological space is an algebra if and only if for any $0$-neighbourhood $U$ in $(\mathcal{A},\tau)$ there is another $0$-neighbourhood $V$ satisfying $V^2 = \{x\cdot y : x,y \in V\}\subset U$. Equivalently, there is a $0$-neighbourhood filter in $(\mathcal{A},\tau)$ with a basis consisting of sets that are stable with respect to multiplication; that is, for a $0$-neighbourhood $U$, there is stable $0$-neighbourhood $V$ satisfying $V^2 \subset V \subset U$.

These imply the compatibility of the algebraic and topological structures in $(\mathcal{A},\Gamma,\cdot,^*,\tau)$, at least in the convex neighbourhood of its points. Thus, the objection map $x \mapsto (x,x) \in \Gamma$ is defined since $x\cdot x \in \mathcal{A}$ and the set of arrows $\Gamma \subset \mathcal{A} \times \mathcal{A}$ is also locally convex since it is determined by the (partial) multiplication. Given the partial multiplication which presupposes compatibility of the algebraic and topological structures, we define a smooth structure on the algebra as follows.

Given the local convexity of $\mathcal{A}$ there exists a locally convex neighbourhood $(x,\varepsilon)$ of a point $x \in \mathcal{A}$ such that we can define a smooth  or Lie groupoid by employing Connes' representation of the locally convex neighbourhoods of the points of $\mathcal{A}$ as $(x,\varepsilon), \varepsilon \in [0,1)$. This is also used to define the locally convex neighbourhood of an arrows $(x,y) \in \Gamma$ as $(x,y,\varepsilon) \in \Gamma \times [0,1) \subset \mathcal{A} \times \mathcal{A} \times [0,1)$. The composition of these infinitesimal arrows becomes \[ (x,y,\varepsilon)\circ (y,z,\varepsilon) = (x,z,\varepsilon) \; \text{for } \varepsilon \in [0,1), x,y,z \in \mathcal{A}. \] This defines a smooth structure on $\mathcal{G} := \Gamma \times [0,1) \rightrightarrows \mathcal{A} \times [0,1)$, thereby making it a smooth groupoid. We give this as a result which is proved as follows.

\begin{prop}
The map $T\mathcal{A} \times [0,1) \rightrightarrows \mathcal{A} \to \Gamma \times [0,1) \rightrightarrows \mathcal{A} \times [0,1)$, defined on the arrows $\Gamma(-,x) \times [0,1) \to T_x\mathcal{A}$ is a groupoid isomorphism.
\end{prop}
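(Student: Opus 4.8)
The plan is to produce the isomorphism as a fibrewise rescaling, in the spirit of Connes' tangent groupoid, and then to check that this rescaling carries the structure maps of one side to those of the other. First I would fix the identifications. Since $\mathcal{A}$ is a locally convex linear space, for each $x\in\mathcal{A}$ the tangent space $T_x\mathcal{A}$ is canonically and topologically isomorphic to $\mathcal{A}$, and, as shown above, the left multipliers $\Gamma(-,x)=\{u\in\mathcal{A}:u\cdot x\in\mathcal{A}\}$ form a linear subspace of $\mathcal{A}=T_x\mathcal{A}$, with $\Gamma(-,x)=\mathcal{A}=T_x\mathcal{A}$ whenever $x$ is a right multiplier, i.e. $x\in\Gamma(\mathcal{A},\cdot)$. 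On object spaces I take the identity of $\mathcal{A}\times[0,1)$ (reading ``$T\mathcal{A}\times[0,1)\rightrightarrows\mathcal{A}$'' as the abelian group bundle $T\mathcal{A}\rightrightarrows\mathcal{A}$ carried along the parameter interval, so that its object space is also $\mathcal{A}\times[0,1)$). On arrows I define $\Psi$ by sending, for $\varepsilon\in(0,1)$, an arrow $(u,x,\varepsilon)\in\Gamma\times[0,1)$ with source $x$ to $\big(x,\varepsilon^{-1}(u-x),\varepsilon\big)$, and at $\varepsilon=0$ the arrow $(u,x,0)$ to $\big(x,u-x\big)\in T_x\mathcal{A}$; its candidate inverse is the reverse rescaling $(x,v,\varepsilon)\mapsto(x+\varepsilon v,x,\varepsilon)$, which is well defined because $(x+\varepsilon v)\cdot x=x\cdot x+\varepsilon(v\cdot x)\in\mathcal{A}$ for $v\in\Gamma(-,x)$, using $x\cdot x\in\mathcal{A}$ and the linearity axioms of the partial algebra. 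This $\Psi$ is the map whose inverse $T\mathcal{A}\times[0,1)\to\Gamma\times[0,1)$ is the object of the statement.

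Next I would check, slice by slice in $\varepsilon$, that $\Psi$ is compatible with source, target, unit, inversion and composition. Units go to units, since $(x,x,\varepsilon)\mapsto(x,0,\varepsilon)$, the zero vector over $x$, which is the identity arrow of the bundle groupoid; the source and target of $(u,x,\varepsilon)$ both land at the base point $x$, consistently with the collapse $u=x+\varepsilon v\to x$ as $\varepsilon\downarrow0$. The key point is that the rescaling turns composition of composable arrows into addition of the corresponding rescaled vectors: if $(u,x,\varepsilon)$ and $(x,w,\varepsilon)$ are composable in $\Gamma\times[0,1)$ with product $(u,w,\varepsilon)$, then $\varepsilon^{-1}(u-x)+\varepsilon^{-1}(x-w)=\varepsilon^{-1}(u-w)$, and likewise inversion corresponds to $\varepsilon^{-1}(x-u)=-\varepsilon^{-1}(u-x)$. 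Bijectivity on each fibre is then immediate, since $u\mapsto\varepsilon^{-1}(u-x)$ is an affine bijection of $\Gamma(-,x)$ onto $\Gamma(-,x)\subseteq T_x\mathcal{A}$, an equality when $x$ is a right multiplier; continuity of $\Psi$ and of its inverse follows because translations and dilations are homeomorphisms of the locally convex space $\mathcal{A}$, together with the Proposition that $\Gamma\rightrightarrows\mathcal{A}$ is a locally convex topological groupoid, while smoothness of the interpolation across $\varepsilon$ follows from the convex-neighbourhood charts $(x,\varepsilon)$ already placed on $\mathcal{G}=\Gamma\times[0,1)\rightrightarrows\mathcal{A}\times[0,1)$ and the automatic continuity of differentials on the locally convex model space recorded from \cite{SchmedingWockel2014} (cf. also \cite{Connes94}).

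The step I expect to be the main obstacle is the behaviour at $\varepsilon=0$, and more precisely the reconciliation of the two sides as faces of a single smooth groupoid: one must show that the rescaled arrows $\varepsilon^{-1}(u-x)$ extend smoothly across $\varepsilon=0$, that the limiting $\varepsilon=0$ fibre of $\Gamma\times[0,1)$ is exactly the sub-bundle of $T\mathcal{A}$ with fibre $\Gamma(-,x)$ (the Lie algebroid of $\Gamma$), and that the pair-type partial composition on the slices $\varepsilon>0$ is transported by the chart to the partially defined fibrewise addition that degenerates to the abelian bundle groupoid at $\varepsilon=0$; this is exactly the content of the \emph{adiabatic groupoid} construction and is where the locally convex manifold structure, rather than mere topology, is essential. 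A secondary technical point is that $\Gamma(-,x)$ is a proper subspace of $\mathcal{A}$ unless $x\in\Gamma(\mathcal{A},\cdot)$, so the isomorphism is stated over the open dense set of right multipliers (equivalently, with $T\mathcal{A}$ replaced by the algebroid fibres $\Gamma(-,x)$), which is harmless because the partial $^*$-algebra is already determined by its multipliers through the Proposition and Corollary on the ideal and module structure of $\Gamma(\cdot,\mathcal{A})\cap\Gamma(\mathcal{A},\cdot)$. Everything else reduces to the routine verification that an affine rescaling preserves the defining identities of a groupoid.
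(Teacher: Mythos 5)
Your map is, up to sign and the linear identification $\exp_x(v)=x+v$ valid in a locally convex linear space, exactly the map the paper uses: your candidate inverse $(x,v,\varepsilon)\mapsto(x+\varepsilon v,x,\varepsilon)$ is the paper's $\exp\colon T\mathcal{A}\times[0,1)\to\mathcal{G}(-,x)\times[0,1)$, $(x,X,\varepsilon)\mapsto(x,\exp_x(-\varepsilon X),\varepsilon)$, and both arguments are instances of Connes' tangent-groupoid rescaling. So the route is the same; you are considerably more explicit than the paper about what needs to be verified.

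However, the step you yourself flag as ``the main obstacle'' is a genuine gap, and neither your argument nor the paper's closes it. The target $T\mathcal{A}\rightrightarrows\mathcal{A}$, as the paper defines it, is a bundle of abelian groups: an arrow $(x,X)$ has $s(x,X)=t(x,X)=(x,0)$, and only arrows over the \emph{same} base point compose (by addition). The source $\Gamma\times[0,1)$ is pair-type: $(u,x,\varepsilon)$ has source $x$ and target $u$, which differ for a generic arrow. A groupoid isomorphism must intertwine source and target, so for $\varepsilon>0$ your $\Psi$ cannot be a groupoid homomorphism: it sends the composable pair $(u,x,\varepsilon)$, $(x,w,\varepsilon)$ to vectors based at $x$ and at $w$ respectively, which are not composable in $T\mathcal{A}$ unless $x=w$; the identity $\varepsilon^{-1}(u-x)+\varepsilon^{-1}(x-w)=\varepsilon^{-1}(u-w)$ is arithmetic in $\mathcal{A}$, not composition in the bundle groupoid. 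Your final paragraph correctly diagnoses that what is actually being built is the adiabatic groupoid, whose $\varepsilon=0$ face is the algebroid $\bigsqcup_{x}\Gamma(-,x)$ while the $\varepsilon>0$ faces remain copies of $\Gamma$, but you defer that construction instead of carrying it out, so the asserted isomorphism of the two product groupoids is not established. For comparison, the paper's own proof stops even earlier: it describes $T\mathcal{A}$ as a union of groups, writes down the exponential map, and asserts that it ``generates'' $\Gamma(-,x)$, without checking the homomorphism property, bijectivity, or the mismatch of orbit structures at all.
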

\begin{proof}
First, the tangent bundle $T\mathcal{A}$ is a groupoid $T\mathcal{A} \rightrightarrows \mathcal{A}$ which is a union of groups $T_x\mathcal{A}$, with $x \in \mathcal{A}$ as the identity element, such that the objection map is $\mathcal{A} \ni x \mapsto (x,0) \in T_x\mathcal{A}$. So the set of arrows of the groupoid $ T\mathcal{A} \rightrightarrows \mathcal{A}$ is $T\mathcal{A} = \{(x,X) : x \in \mathcal{A}\}$, where $X$ is a vector field or infinitesimal generator at the point $x \in \mathcal{A}$ and $X|_x$ is a tangent vector to $\mathcal{A}$ at $x$. The target and source maps for an arrow are defined $s(x,X) = (x,0)$, $t(x,X) = (x,0)$; the composition of vector fields at $x$ is given as $(x,X_1)\circ (x,X_2) = (x,X_1 + X_2)$.

The equivalence of these groupoids is now established as follows. By definition, the smooth or Lie groupoid $\Gamma \times [0,1) \rightrightarrows \mathcal{A} \times [0,1)$ has an open convex neighbhourhood of the identity arrow $(x,x)$ as $(x, x, \varepsilon)$ which is constructed from the locally convex partial $^*$-algebra. From the locally convex infinitesimal neighbourhood of the generators $(x,X,\varepsilon)$ of the tangent space to $\mathcal{A}$ at $x$, contained in the groupoid $T\mathcal{A} \times [0,1) \rightrightarrows \mathcal{A}$, the exponential map generates the open submanifold $\Gamma(-,x)$ of the Lie groupoid as follows \[\exp : T\mathcal{A} \times [0,1) \to \mathcal{G}(-,x) \times [0,1); \; (x, X, \varepsilon) \mapsto (x,\exp_x(-\varepsilon X),\varepsilon) \] where $(x,X,0) \mapsto (x,x) \in \Delta \subset \mathcal{A} \times \mathcal{A} \simeq \mathcal{A}$ is an identity.
\end{proof}
\begin{coro}
The locally convex groupoid $\mathcal{G} := \Gamma \times [0,1) \rightrightarrows \mathcal{A} \times [0,1)$ generated from the locally convex infinitesimal neighbourhoods of $T\mathcal{A} \rightrightarrows \mathcal{A}$ is a Lie groupoid.
\end{coro}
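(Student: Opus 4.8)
The plan is to verify that $\mathcal{G}:=\Gamma\times[0,1)\rightrightarrows\mathcal{A}\times[0,1)$ satisfies each clause of the definition of a locally convex Lie groupoid quoted above, leaning almost entirely on two facts already in hand: that $\Gamma\rightrightarrows\mathcal{A}$ is a locally convex topological groupoid whose arrow space $\Gamma\subset\mathcal{A}\times\mathcal{A}$ carries the induced subspace topology, and that the exponential map of the preceding Proposition is a groupoid isomorphism $T\mathcal{A}\times[0,1)\to\Gamma\times[0,1)$ restricting on target fibres to $\Gamma(-,x)\times[0,1)\to T_x\mathcal{A}$. The Lie groupoid structure is then obtained by transporting the (evident) Lie groupoid structure of $T\mathcal{A}\times[0,1)\rightrightarrows\mathcal{A}$ along this isomorphism.

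First I would fix the smooth locally convex manifold structures. Since $(\mathcal{A},\tau)$ is a locally convex space it is a smooth manifold modelled on itself, and Connes' device of attaching to each $x\in\mathcal{A}$ the open convex neighbourhood $(x,\varepsilon)$, $\varepsilon\in[0,1)$, produces an atlas whose chart changes are translations composed with homotheties, hence smooth; thus $\mathcal{A}\times[0,1)$ is a locally convex manifold (with boundary along $\varepsilon=0$). Because the partial multiplication is continuous, $\Gamma$ is a locally convex subspace of $\mathcal{A}\times\mathcal{A}$, so the same convex-neighbourhood construction applied to an arrow as $(x,y,\varepsilon)$ equips $\Gamma\times[0,1)$ with a compatible locally convex manifold structure; equivalently one pulls back the manifold structure of $T\mathcal{A}\times[0,1)$ along $\exp$, which automatically renders $\exp$ a diffeomorphism.

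Next I would check the structure maps. Under the exponential isomorphism the target and source $t,s\colon\Gamma\times[0,1)\to\mathcal{A}\times[0,1)$ correspond to the bundle projection $(x,X,\varepsilon)\mapsto(x,\varepsilon)$, which is a smooth surjective submersion (a local projection) with locally convex kernel $T_x\mathcal{A}$; hence $s,t$ are local projections and the fibred product $\Gamma\times_{s,t}\Gamma$ is a locally convex submanifold. The objection map $x\mapsto(x,x)$ is the restriction of the diagonal embedding and is smooth; the inversion $(x,y,\varepsilon)\mapsto(y,x,\varepsilon)$ is the restriction of the coordinate flip, hence smooth; and the composition $(x,y,\varepsilon)\circ(y,z,\varepsilon)=(x,z,\varepsilon)$ becomes, in exponential coordinates, the fibrewise addition $(x,X_1,\varepsilon)\circ(x,X_2,\varepsilon)=(x,X_1+X_2,\varepsilon)$, which is smooth since addition on $\mathcal{A}\simeq T_x\mathcal{A}$ is continuous bilinear. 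Assembling these verifications yields all the axioms, so $\mathcal{G}$ is a Lie groupoid modelled on $(\mathcal{A},\tau)$.

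The point that needs the most care — and where I would concentrate the argument — is the compatibility of the two descriptions of the arrow manifold, i.e. that the Connes convex-neighbourhood atlas on $\Gamma\times[0,1)$ agrees with the atlas pulled back along $\exp$, which amounts to $\exp$ being a local diffeomorphism near the unit section $\Delta$. In the locally convex category this is not automatic, since the inverse function theorem may fail; so rather than invoking a general theorem I would use the explicit form $(x,X,\varepsilon)\mapsto(x,\exp_x(-\varepsilon X),\varepsilon)$ together with local convexity of $\mathcal{A}$ — which makes the homothety $X\mapsto-\varepsilon X$ a diffeomorphism of convex $0$-neighbourhoods and identifies $\exp_x$ on such a neighbourhood with a chart — to exhibit a smooth local inverse by hand. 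Once this compatibility is secured, everything else is routine transport of structure through the groupoid isomorphism of the preceding Proposition.
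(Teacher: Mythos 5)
Your proposal takes essentially the same route as the paper: the paper's proof of this corollary is a one-line deferral to the preceding Proposition (the exponential isomorphism with $T\mathcal{A}\times[0,1)\rightrightarrows\mathcal{A}$) together with a citation of Connes' convex-neighbourhood construction, which is exactly the transport-of-structure argument you carry out explicitly. Your version is in fact more careful than the paper's, since you identify and address the one genuinely delicate point --- that $\exp$ being a local diffeomorphism is not automatic in the locally convex category --- which the paper passes over in silence.
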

\begin{proof}
The proof of this follows immediately from the proof of the preceding result. Reference can also be made to Connes' construction in (\cite{Connes94},Ch.2, Sect.5).
\end{proof}
\begin{lem}
The connected component $\mathscr{K}$ of arrows generated by the locally convex neighbourhood of an identity arrow $(x,x,\varepsilon)$, which corresponds to the locally convex neighbourhood $(x,\varepsilon)$ of the point $x \in \mathcal{A}$, has the action of $[0,1)$. It is therefore isomorphic to the $\mathcal{G}(x,x)$-space $\Gamma(x,-)$ which is a subspace of Lie groupoid $\Gamma \rightrightarrows \mathcal{A}$.
\end{lem}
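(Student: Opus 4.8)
The plan is to transport the whole statement through the exponential map of the preceding Proposition: first to identify $\mathscr{K}$ with the target fibre of $x$ in the Lie groupoid, and then to read off Connes' scaling parameter $\varepsilon\in[0,1)$ as the claimed action. Recall that $\exp\colon T\mathcal{A}\times[0,1)\to\mathcal{G}(x,-)\times[0,1)$, $(x,X,\varepsilon)\mapsto(x,\exp_x(-\varepsilon X),\varepsilon)$ with $(x,X,0)\mapsto(x,x)\in\Delta$, carries the locally convex infinitesimal neighbourhood of the zero generator at $x$ onto an open neighbourhood of the identity arrow $(x,x)$, which under the objection map $o$ is the convex neighbourhood $(x,\varepsilon)$ of $x$. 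Since $\mathcal{A}$ is a linear space, $T_x\mathcal{A}\cong\mathcal{A}$ and we may take $\exp_x$ to be the flat exponential $Y\mapsto x+Y$, so that the arrow produced is $(x,x-\varepsilon X,\varepsilon)$; requiring this to lie in $\Gamma$ forces $x-\varepsilon X\in\Gamma(x,-)$, i.e.\ $X\in\Gamma(x,-)$ (a linear subspace containing $x$, since $x\cdot x\in\mathcal{A}$). Thus for each $\varepsilon\in(0,1)$ the map $X\mapsto(x,x-\varepsilon X,\varepsilon)$ is a bijection of $\Gamma(x,-)$ onto the target fibre $\{(x,b,\varepsilon):b\in\Gamma(x,-)\}\cong\Gamma^x$ at scale $\varepsilon$, degenerating to the identity arrow at $\varepsilon=0$; by construction this fibre is exactly the component $\mathscr{K}$ generated from the neighbourhood of $(x,x,\varepsilon)$, and the projection $(x,b,\varepsilon)\mapsto b$ is a bijection $\mathscr{K}\to\Gamma(x,-)$, bicontinuous because $\Gamma\subset\mathcal{A}\times\mathcal{A}$ carries the induced locally convex topology, and smooth with smooth inverse for the smooth structure fixed in the Corollary.

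For the action, let $[0,1)$ act on $\mathscr{K}$ by the homotheties $h_\lambda$ centred at $x$ that define the convex neighbourhoods, namely $h_\lambda\cdot(x,x-\varepsilon X,\varepsilon)=(x,x-\lambda\varepsilon X,\varepsilon)$, which on the $b$-coordinate is $b\mapsto(1-\lambda)x+\lambda b$. This is continuous, multiplicative in the sense $h_\lambda h_\mu=h_{\lambda\mu}$, compatible with the composition of arrows, and at $\lambda=0$ collapses every arrow onto the identity $(x,x)$; it is the asserted action of $[0,1)$ on $\mathscr{K}$. Finally, identifying the isotropy $\mathcal{G}(x,x)=\{(x,x,\delta):\delta\in[0,1)\}$ with $[0,1)$ and letting the element labelled $\delta$ act on the target fibre $\Gamma^x=\Gamma(x,-)$ by $h_\delta$, one checks directly that the bijection of the first paragraph intertwines this $\mathcal{G}(x,x)$-action with the $[0,1)$-action just defined. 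Hence $\mathscr{K}\cong\Gamma(x,-)$ as $\mathcal{G}(x,x)$-spaces, and $\Gamma(x,-)$, embedded as $\Gamma^x\subset\Gamma$ via $b\mapsto(x,b)$, is the advertised subspace of the Lie groupoid $\Gamma\rightrightarrows\mathcal{A}$.

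The step I expect to be the main obstacle is identifying $\mathscr{K}$ with the \emph{entire} target fibre $\Gamma(x,-)$, rather than with a proper open sub-component generated by a small neighbourhood of $(x,x,\varepsilon)$. This is exactly where local convexity of $\mathcal{A}$ is indispensable: the homothety-invariance of convex $0$-neighbourhoods (cf.\ \cite{Bourbaki81}) forces $\Gamma(x,-)$ to be a connected open linear subspace, so that the exponential images together with the rescalings $h_\lambda$ ($\lambda\in[0,1)$) sweep out all of it; one also needs $\Gamma(x,-)$ to be genuinely open in $\mathcal{A}$, which rests on the continuity of $b\mapsto x\cdot b$ and the identification of unbounded multipliers with open linear subspaces recalled in the Introduction. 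A secondary technical point — the consistency of the $[0,1)$-action through $\exp$, i.e.\ its independence of the representative generator $X$ — is disposed of by the coordinate-free description of $h_\lambda$ as the homothety of the convex neighbourhood $(x,\varepsilon)$.
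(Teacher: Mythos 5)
Your argument reaches the same conclusions as the paper but by a genuinely different and more explicit route, and it is worth recording where the two diverge. The paper's own proof works with nets: it notes that $\Gamma(x,-)$ is generated by the convex neighbourhood $(x,\varepsilon)$ and is therefore connected and open, defines a map $I=[0,1)\to\mathcal{A}^{\mathcal{A}}$, $\varepsilon\mapsto\gamma_\varepsilon=(x,y)$, whose arrows converge to the identity arrow $(x,x)$ as $\varepsilon\to 0$, observes that $(x,\varepsilon y)\in\Gamma(x,-)$ gives an action $\Gamma(x,-)\times\mathscr{K}\to\Gamma(x,-)$, and concludes $\mathscr{K}\simeq[0,1)$; the ``isomorphism'' in the lemma's second sentence is then read as an equivalence between this net action on $\Gamma(x,-)$ and the $\mathcal{G}(x,x)$-action, obtained only ``when the endpoints of $[0,1)$ are identified.'' You instead identify $\mathscr{K}$ with the fibre $\Gamma(x,-)$ itself via the flat exponential $X\mapsto x-\varepsilon X$ and place the $[0,1)$-action on $\mathscr{K}$ by the homotheties $b\mapsto(1-\lambda)x+\lambda b$. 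This matches the literal wording of the statement ($\mathscr{K}$ isomorphic to the $\mathcal{G}(x,x)$-space $\Gamma(x,-)$) better than the paper's proof does, and it buys concreteness: the action is written down, the linearity argument showing $X\in\Gamma(x,-)$ is correct, and you rightly isolate the surjectivity of the exponential sweep onto all of $\Gamma(x,-)$ as the point where local convexity is actually used --- something the paper only gestures at.

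One shared weak point deserves to be named: in the groupoid $\Gamma\subset\mathcal{A}\times\mathcal{A}$ as constructed in Section 1, the isotropy $\mathcal{G}(x,x)$ consists of the single arrow $(x,x)$, so your identification $\mathcal{G}(x,x)=\{(x,x,\delta):\delta\in[0,1)\}\simeq[0,1)$ is an added convention rather than something that can be ``checked directly''; the paper finesses the same difficulty with its endpoint-identification remark. If you adopt that convention explicitly, your intertwining claim is as well supported as the paper's, but as written the final sentence of your second paragraph asserts a verification that neither you nor the paper actually carries out.
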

\begin{proof}
Since the smooth locally convex open submanifold $\Gamma(x,-)$ of the Lie groupoid $\Gamma \rightrightarrows \mathcal{A}$ is generated by the locally convex neighbourhood $(x,\varepsilon)$ of $x \in \mathcal{A}$ (or the neighbourhood $(x,x,\varepsilon)$ of the identity arrow $(x,x)$), it is a connected open subspace of $\mathcal{A} \times \mathcal{A} \times [0,1)$ according to \cite{OmokriHideki97}. This is equivalently expressed as $\mathcal{A} \times [0,1) \to \mathcal{A}$. For $I = [0,1)$, this defines a map $I \to \mathcal{A}^\mathcal{A}, \varepsilon \mapsto \gamma_\varepsilon = (x,y)$ which is action of a net on the arrows, such that the arrows $\{(x,y) : y \in \mathcal{A}\}$ converge to the identity arrows $(x,x)$ as $\varepsilon \to 0$. This action defines a net of local bisections (to be see in the subsequent sections). The nature of the open locally convex submanifold of arrows $\Gamma(x,-) \subset \Gamma$ depends on each point $x \in \mathcal{A}$. It is maximal when $x \in \Gamma(-,\mathcal{A})$-a left multiplier of $\mathcal{A}$.

Now, let $\mathscr{K}$ be the connected component of arrows generated by $(x,x,\varepsilon)$ or $(x,\varepsilon)$. It is a smooth net resulting from the smoothing action of $[0,1)$ on $\Gamma(x,-)$. The partial symmetry of $\Gamma(x,-)$ is encoded by the net $\mathscr{K}$. Since $(x,\varepsilon y) \in \Gamma(x,-), \varepsilon \in [0,1)$, it follows that the action $\Gamma(x,-) \times \mathscr{K} \to \Gamma(x,-)$ encodes the sectional dynamical system of the Lie groupoid $\Gamma$. Thus, $[0,1) \simeq \mathscr{K}$. When the endpoints of the interval $[0,1)$ are identified, the net action is equivalent or similar to the action of the isotropy Lie group $\mathcal{G}(x,x)$ on $\Gamma(x,-)$.
\end{proof}
The inverse operation to the generation of the smooth connected component $\mathscr{K}$ is the limit process on the derived filters $\{\mathcal{F} \to x\}$ of the net action of $[0,1)$. Thus, the filters correspond to the convergent nets $x_\varepsilon \to x, y_\varepsilon \to x$, where $(x_\varepsilon, y_\varepsilon) \to (x,x) \in \Gamma$ and $(x, y_\varepsilon) \in \Gamma(x,-)$. This implies that the net of arrows $(x_\varepsilon, x)$ and $(x,y_\varepsilon)$ are generated by the smooth net $\mathscr{K}$ on a given arrow $(x,y)$ making up the locally convex linear submanifolds $\Gamma(x,-), \Gamma(-,x)$.

If we now consider the infinitesimal neighbourhood, it will follow that the convergence of the two nets gives $\mathcal{G}(x,x) = \Gamma(x,-) \times [0,1) \cap [0,1)\times \Gamma(-,x)$; while the filter $\mathcal{F} \to x$ is equivalent to $\Gamma(x,-) \times [0,1) \cup [0,1)\times \Gamma(-,x) \to \mathcal{G}(x,x)$ which is the isotropy Lie group of the smooth groupoid $\mathcal{G} = (\Gamma \rightrightarrows \mathcal{A})$. This is equivariant to the convergence of the quotient \[(x_\varepsilon \to x, y_\varepsilon \to x, \frac{x_\varepsilon - y_\varepsilon}{\varepsilon} \to X, \varepsilon \to 0) \to (x,X,0), \] for each net of arrows in $\Gamma(x,-) \times [0,1)$. This is a net condition for smoothness similar to Connes' sequential condition because of the smooth $\mathscr{K}$-action on $\mathcal{G}(x,-)$. (Cf. \cite{Connes94}).
\begin{rem}
We make the following remarks. \\ First, the vector fields $X \in T_x\mathcal{G}(x,x)$ correspond to the universal flows $\varphi(\tau,x)$ associated with the Lie groupoid $\Gamma \rightrightarrows \mathcal{A}$, where $\varphi(0,x) = x$ for any fixed $x \in \mathcal{A}$.  This implies $X|_x \in  T_x(\Gamma(x,-)) \simeq T_x(\Gamma(-,x))$. \\
Second, there are also local fields $Y|_x \in T_x(\Gamma(x,-))$ defined for some $x \in \mathcal{A}$ and not for all; that is, $Y|_y = 0$ for some $y \in \mathcal{A}$; these are not parallelizable (or globalizable through parallel transport). The maximal of these tangent spaces occurs when $x \in \Gamma(-,\mathcal{A})$ (or $\Gamma(\mathcal{A},-)$). Because the subspace generated by the convex neighbourhood of such $x \in \mathcal{A}$ is maximal and dense in $\mathcal{A}$, the corresponding flows are connected to the unbounded infinitesimal generators of the Lie groupoid $\Gamma \rightrightarrows \mathcal{A}$. They give rise to unbounded operators as described in the introduction of \cite{Antoine-etal2002}.\\
Third, the closed submanifolds $\Gamma(x,y) \subset \Gamma$ are $\mathcal{G}(x,x)$-spaces of the same dimension with $\mathcal{G}(x,x)$, while the open (maximal) submanifold $\Gamma(-,x) \simeq \Gamma(x,-)$ are $\Gamma$-modules and infinite dimensional. The structure and the symmetry of the open and dense submanifolds are now explored with the local bisections $B_\ell(\mathcal{G})$ which are Lie pseudogroup characterized as follows.
\end{rem}
\begin{defx}
Given the isomorphism $\mathscr{B}(\Gamma) \simeq \text{Diff}(\mathcal{A})$, defined by $\varphi \leftrightarrow t \circ \varphi$, between the set of diffeomorphisms of the base manifold of a Lie groupoid and the set of bisections of the Lie groupoid in \cite{Mackenzie2005}, which can be restricted to the collection of the local diffeomorphisms $\text{Diff}(U)$ of a trivialization $U$ of the base manifold and the local bisections defined on it $\mathscr{B}_\ell(U)$, we have the following properties defining a Lie pseudogroup: \\ (1) For any $t \circ \varphi \in \text{Diff}(U)$, $t\circ \varphi : U \to \mathcal{A} \; \implies \; t \circ \varphi|_V \in \text{Diff}(U)$, for all $V \subseteq U$; \\ (2) If $L \subseteq \mathcal{A}$ is open with $L = \underset{\alpha}\bigcup U_\alpha$, then if $t \circ \varphi : L \to \mathcal{A}$ with $t \circ \varphi|_{U_\alpha} \in \text{Diff}(L)$, then $t \circ \varphi \in \text{Diff}(L)$; \\ (3) $\text{Diff}(U)$ is closed under composition, since for any two bisections $\sigma$ and $\tau$, the homomorphism property of the target map $t$ implies $t : \sigma \star \tau \mapsto t \circ (\sigma \star \tau) = (t \circ \sigma) \circ (t \circ \tau)$; (closure of composition implies this, since $\sigma \star \tau$ is defined whenever a diffeomorphism $\phi$ links $\sigma$ and $\tau$.) \\ (4) The identity diffeomorphisms are in $\text{Diff}(U)$; \\ (5) Each $t \circ \varphi$ has an inverse $(t \circ \varphi)^{-1}$. This follows because of the localization, say at the neighbourhood of $y$, where $(t_y \circ \varphi)^{-1} = \varphi^{-1} \circ t_y^{-1}$; thus, given any $x$ in the neighbourhood of $y$, i.e. in $U$, we have $\varphi^{-1} \circ t_y^{-1}(x) = \varphi^{-1}(\Gamma(x,y)) = y$.
\end{defx}
\begin{rem}
We note the fact that the smoothing action of $[0,1)$ has made the locally convex space $\mathcal{A}$ which is the base space of the Lie groupoid a smooth manifold. Hence the objection map $o : x \mapsto (x,x)$ is an embedding into a smooth manifold. So, the set $\mathscr{B}_\ell(U)$ of local bisections of a trivialization $U$ is a Lie pseudogroup.
\end{rem}

We have successfully modelled the locally convex partial $^*$-algebra $(\mathcal{A},\Gamma,\cdot,^*,\tau)$ as a locally convex Lie groupoid $\Gamma \rightrightarrows \mathcal{A}$ which is a smooth locally convex manifold modelled on the locally convex topological space $\mathcal{A}$ given the relation $\Gamma$. This formulation gives the isomorphism $\Gamma \rightrightarrows \mathcal{A} \simeq C^\infty(\mathcal{A})|_\Gamma$. The following definitions of a smooth manifold modelled on a locally convex topological space and the diffeomorphisms arising from the model apply to the formulation.
\begin{defx}\cite{Glockner2006}
A smooth manifold $M$ modelled on a locally convex topological space $\mathcal{A}$ is a Hausdorff topological space, together with a set $\mathscr{G}$ of homeomorphisms from open subsets of $M$ onto open subsets of $\mathcal{A}$, such that the domains cover $M$ and the transition maps are smooth.
\end{defx}
\begin{defx}\cite{Glockner2006}
Let $E$ and $F$ be real locally convex spaces, $U \subseteq E$ be open, and $f : U \to F$ be a map. For $x \in U$ and $y \in E$, let $(D_yf)(x) := \frac{d}{dt}|_{t=0}f(x + ty)$ be the directional derivative (if it exists). Given $k \in \mathbb{N}\cup\{\infty\}$, the map $f$ is called $C^k$ if it is continuous, the iterated directional derivatives $d^jf(x,y_1, \cdots, y_j) := D_{y_j}\cdots D_{y_1}f(x)$ exist for all $j \in \mathbb{N}$ such that $j \leq k, x \in U$ and $y_1, \cdots, y_j \in E$, and all of the maps $d^jf : U \times E^j \to F$ are continuous. ($f$ is smooth if $k = \infty$.)
\end{defx}
As hinted in the introduction, the existence of a Lie group structure on the set of bisections of the Lie groupoid $\Gamma \rightrightarrows \mathcal{A}$, which is related and derivable from the canonical smooth structure on the manifold of arrows $\Gamma$, is a result of the local convexity of $\mathcal{A}$. This understanding that the Lie groupoid $\Gamma \rightrightarrows \mathcal{A}$ is a smooth manifold modelled on a locally convex topological space $\mathcal{A}$ carrying a partial $^*$-algebra given by the relation $\Gamma$ is subsequently employed to define its smooth system of Haar measures.

\section{Haar System and Inductive Limit of Lie Groupoid}
The diffeomorphims and their differentials constitute differential forms on a smooth manifold. According to \cite{Morita97}, differential forms are used to express various geometric structures on manofolds.To obtain certain geometric "invariants", appropriate operations are applied to differential forms which are integrable on manifolds. We briefly describe the operations to be applied to differential forms to obtain Haar measures for our Lie groupoid.

We first propose that unbounded operators correspond to open and dense locally convex subspaces $\Gamma(x,-), \Gamma(-,y)$. We shall also linke these to locally convex neighbourhoods and their corresponding infinitesimal generators. There is need to connect these to differential forms which are integrable objects of a (smooth) manifold. Since we have already seen that $\Gamma(x,-)$ and $\Gamma(-,y)$ are locally convex topological spaces given the Lie groupoid $\Gamma \rightrightarrows \mathcal{A}$ defined by the relation $\Gamma = \{(x,y) : x\cdot y \in \mathcal{A}\}$ on $\mathcal{A}$, we consider the following smooth functions associated with the parameterizaton of the open submanifold of arrows $\Gamma(x,-)$. \[ \phi(x + ty) : \Gamma(x,-) \to \R^m \eqno{(1)}\] where $m$ is the dimension of the maximal closed neighbourhood $\Gamma(x,y) \subset \Gamma(x,-)$ containing a trivialization $x$. These functions constitute the set of homeomorphisms from open subsets of $\Gamma(x,-)$ of the Lie groupoid onto open subsets of $\R^m$. From this background, we employ the definition of smooth measures on a smooth manifold as given in \cite{Folland99}.
\begin{defx}
Let $M$ be a smooth manifold of dimension $m$. Then a smooth measure on $M$ is a Borel measure $\mu$ which is given in local coordinate $x$ as $d\mu = \phi^xdx$, where $\phi^x$ is a nonnegative smooth function. The change of coordinates for the smooth measure $\mu$ is given as \[ \phi^x = |\det\left(\frac{\partial y}{\partial x}\right)|\phi^y \; \implies \; d\mu = |\det\left(\frac{\partial y}{\partial x}\right)|\phi^ydy \eqno{(2)} \]
\end{defx}
This is also related to density on the smooth manifold $\Gamma \rightrightarrows \mathcal{A}$, which is a section of the smoothly varying line bundle on the Lie groupoid. From the above, the Borel measures on $\Gamma \rightrightarrows \mathcal{A}$ are differential forms generated or spanned by $dx$ in the coordinate system $x$ of $\Gamma(x,y) \subset \Gamma(x,-)$. It follows that the measures vary according to the smooth function $\phi^x$; so that $\phi^x$ represents a density (or its element) in each coordinate system $x$, and its variation in coordinate change is governed by (2). This helps us to define both a differential form and a density on $\Gamma \rightrightarrows \mathcal{A}$ as follows.
\begin{defx}
A differential form or an $m$-form on an $m$-dimensional manifold $\Gamma(x,y) \subset \Gamma(x,-)$ is a section of the line bundle whose transition functions are $\det(\frac{\partial y}{\partial x})$. So, given an $m$-form $\omega^x$ for the coordinate system $x$ or $(U,\phi^x)$ in $\Gamma(x,-)$, its transformation in the $y$ coordinate system of $\Gamma(y,-)$ is $\omega^y$; and the two are related by the formula \[ \omega^x = \det \left(\frac{\partial y}{\partial x}\right)\omega^y. \eqno{(3)}\]
The usual notation for the differential form is $\omega = \omega^x dx_1\wedge \cdots \wedge dx_m$.
\end{defx}
\begin{defx}
A density is a function on coordinate charts of $\Gamma(x,-)$ which becomes multiplied by the absolute value of the Jacobian determinant in the change of coordinates $\phi^x = |\det(\frac{\partial y}{\partial x})|\phi^y$ or between the coordinate charts on the Lie groupoid $\Gamma \rightrightarrows \mathcal{A}$.
\end{defx}
Thus, orientation is the only difference between a density $\phi^x$ and a differential form $\omega^x$; the former is without an orientation, while the latter is with orientation. The two coincide when there is a restriction to a coordinate system with Jocabian matrices of positive determinant. So, the differential forms which are smooth measures are identified with nonnegative densities on the smooth manifold of arrows/maps of the Lie groupoid $\Gamma \rightrightarrows \mathcal{A}$.

Based on the treatment of \cite{Folland99}, we see that any density $\phi$ (a diffeomorphism) on the manifold of arrows $\Gamma$ of the Lie groupoid $\Gamma  \rightrightarrows \mathcal{A}$ defines, at least locally,  a smooth signed (or complex) measure $\mu$ on $\Gamma \rightrightarrows \mathcal{A}$, so that the following integrals are well defined for any compact set $K \subset \Gamma$;  \[ \int_K \phi = \mu(K); \; \text{and } \int f\phi = \int fd\mu; \eqno{(4)} \] for any $f \in C_c(\Gamma \rightrightarrows \mathcal{A}) = C_c(\Gamma)$. So, the diffeomorphisms defined in (1) above are also measures supported on compact (closed) subsets $\Gamma(x,y)$ of $\Gamma(x,-)$.

When the densities (as measures) are normalized, they give rise to probability measures as given in \cite{Folland99}. This is done by having $0 \leq \theta \leq 1$, which presents a $\theta$-density on the Lie groupoid $\Gamma \rightrightarrows \mathcal{A}$ to be a section of the line bundle whose transition functions on $\Gamma(x,-) \cap \Gamma(y,-)$ are $|\det \left(\frac{\partial y}{\partial x}\right)|^\theta$. Given this normalization, a $1$-density becomes a standard density while a $0$-density is a smooth function on $\Gamma \rightrightarrows \mathcal{A}$. When $0 < \theta < 1$ and $p = \theta^{-1}$ and $\phi$ is a $\theta$-density, then $|\phi|^p$ is well defined density and therefore integrable over the Lie groupoid $\Gamma \rightrightarrows \mathcal{A}$.

Alternatively, since densities are also smooth functions on the Lie groupoid $\Gamma \rightrightarrows \mathcal{A}$, the set of $\theta$-densities $\phi$ (usually denoted $|\Omega|^p(\Gamma)$, with $p = \theta^{-1}$) satisfying the norm condition \[ ||\phi||_p = \left(\int |\phi|^p \right)^{\frac{1}{p}} < \infty \] is a normed linear space with $L^p(\Gamma)$ as its completion. In other words, $|\Omega|^p(\Gamma) \subset L^p(\Gamma)$. The completion is the \emph{intrinsic $L^p$-space} of the normed space $|\Omega|^p(\Gamma)$ connected with the Lie groupoid $\Gamma \rightrightarrows \mathcal{A}$.

By the duality associated to this definition of the space of densities on the Lie groupoid $\Gamma \rightrightarrows \mathcal{A}$ as a dense subset of $L^p(\Gamma)$ which also follows from \cite{Folland99}, the product of two $\frac{1}{2}$-densities is 1-density. Thus, an inner product is defined on the space of $\frac{1}{2}$-density with compact support as \[ \langle \omega, \eta \rangle = \int_\Gamma \omega\bar{\eta}. \eqno{(5)} \] This makes the space of $\frac{1}{2}$-densities $|\Omega|^2(\Gamma) \subset L^2(\Gamma)$ a pre-Hilbert space, with $L^2(\Gamma)$ as its canonically associated Hilbert space completion. With this understanding, the definition of smooth system of Haar measures on a Lie groupoid follows.

\subsection{System of Haar Measures and Convolution Algebra}
Since we have shown the groupoid model of the locally convex partial $^*$-algebra $(\mathcal{A},\Gamma, \cdot,^*, \tau,)$ to be a Lie groupoid, and the diffeomorphisms $\phi(x + ty) : \Gamma(x,-) \to \R^m$ show that $\Gamma(x,-)$ is locally compact; it follows that it has a left Haar system of measures defined by Paterson as follows.
\begin{defx}(cf. \cite{Paterson99})
A left Haar system for the Lie groupoid $\mathcal{G} := \Gamma \rightrightarrows \mathcal{A}$ is a family $\{\mu^x \}_{x \in \mathcal{A}}$, where each $\mu^x$ is a positive regular Borel measure on the locally compact (convex) Hausdorff space $\Gamma(x,-)$, such that the following three axioms are satisfied.\\ (i) the support of each $\mu^x$ is the $t$-fibre $\Gamma(x,-)$; \\
(ii) for any $f \in C_c(\Gamma)$, the function $\displaystyle f_o(x) = \int_{\Gamma(x,-)}fd\mu^x$ belongs to $C_c(\mathcal{A})$; \\
(iii) for any $\gamma \in \Gamma$ and $f \in C_c(\Gamma)$, $\displaystyle \int_{\Gamma(s(\gamma),-)}f(\gamma\eta)d\mu^{s(\gamma)}(\eta) = \int_{\Gamma(t(\gamma),-)}f(\kappa)d\mu^{t(\gamma)}(\kappa)$.
\end{defx}
This is in line with the smoothness condition for a Lie groupoid and our consideration above; it is also related to a choice of appropriate local coordinates making the Radon-Nikodym derivatives of the family $\{ \mu^x \}$ strictly positive and smooth. Hence, the smooth left Haar system of measures on the Lie groupoid $\Gamma \rightrightarrows \mathcal{A}$ is unique up to equivalence due to the smooth net $\mathscr{K}$-action; which means they are in the same class of measures. Thus, the system is isomorphic to the strictly positive sections of the 1-density line bundle $\Omega^1(T_\gamma(\Gamma(x,-)^*)$. This is established as follows.

By the bijection between (cross) sections of a fibre bundle and the set of maps from a base space to the fibres, a section $\varphi$ uniquely defines a function from the base space to the fibre $f : \mathcal{A} \to \Gamma$. Thus, a section $\varphi$ is of the form $\varphi(x) = (x,f(x)), x \in \mathcal{A}, (x,f(x)) \in \Gamma \subset \mathcal{A} \times \mathcal{A}$. (Cf. \cite{Husemoller94}). By definition of the partial product, the definition of the function $f : \mathcal{A} \to \Gamma$, and subsequently, the section $\varphi$ is localized. Thus, we have $\varphi(x) = f(x + \varepsilon y)$ by the smooth action of $[0,1)$, where $\varepsilon \in [0,1)$, giving rise to differential form as given above.

Furthermore, though $\varphi$ is a section to the source map $s : \Gamma \to \mathcal{A}$, which means $s \circ \phi = I_\mathcal{A}$, it is required to define a diffeomorphism with the target map; that is, $t \circ \varphi : \mathcal{A} \to \mathcal{A}$ for it to be a bisection of the Lie groupoid $\Gamma \rightrightarrows \mathcal{A}$. The partial product forces it to be a local bisection $\varphi \in \mathcal{B}_\ell(\mathcal{G}) \subset \mathcal{B}(\mathcal{G})$ by restricting to an open neighbourhood $t(N_x) = t((x,\varepsilon)) \subseteq \mathcal{A}$ of $x \in \mathcal{A}$ where the map $\varphi \mapsto t \circ \varphi$ is defined, the target map $t_x : \Gamma(-,x) \to \mathcal{A}$ is a surjective submersion, and $t\circ \varphi : N_x \to (t\circ \varphi)(N_x)$ a diffeomorphism. (Cf. \cite{Mackenzie2005}, Definition 1.4.8).

In addition, since the set of (cross) sections of a fibre bundle constitutes a module over the ring of continuous functions from the base space to the fibres $\mathcal{A} \to \Gamma$, which are manifold-valued functions, it follows that the local bisections $\mathcal{B}_\ell(\mathcal{G})$ constitute a module over the arrows which are continuous functions uniquely determined by the local bisections, and a Lie pseudogroup by the smooth action of $[0,1)$. Thus, by the bijection $\varphi \mapsto t \circ \varphi, \mathcal{B}_\ell(\mathcal{G}) \to \text{Diff}(N_x)$ which preserves the partition of $\mathcal{A}$ under the relation $\Gamma$, we can replace the arrows $\gamma \in \Gamma$ with the local bisections $\varphi \in \mathcal{B}_\ell(\mathcal{G})$. Thus, the partial product structure of $\Gamma \rightrightarrows \mathcal{A}$ is encoded by the local bisections $\mathcal{B}_\ell(\mathcal{G}) \subset \mathcal{B}(\mathcal{G})$.

The smooth chart defined by (1) lays bare the structure of this manifold of sections (or arrows), and also defines Borel measures on it as given above. Thus, the use of the open (convex) neighbourhood $N_x \simeq (x,x,\varepsilon) \subset \Gamma(x,-)$ as a $t$-fibrewise product agrees with the definition of the smooth Haar system using the diffeomorphisms of (1), which are also connected to the bisections $\mathcal{B}_\ell(\mathcal{G})$.
\begin{defx}(cf. \cite{Paterson99})
Let $N_x$ be an open subset of the Lie groupoid $\mathcal{G} := \Gamma \rightrightarrows \mathcal{A}$. Since $t : \Gamma(x,-) \to \mathcal{A}$ is a submersion, it is open. So, $t(N_x)$ is open in $\mathcal{A}$. The pair $(N_x,\phi)$ is called a $t$-fibrewise product if there exists an open subset $W$ of $\R^m$ containing $0$, and $\phi$ is a diffeomorphism from $N_x$ onto $t(N_x) \times W$ preserving $t$-fibres in the sense that $p_1(\phi(\gamma)) = t(\gamma), \; \forall \; \gamma \in N_x$, where $p_1$ is the projection on the first coordinate of $t(N_x) \times W$.
\end{defx}
\begin{defx}
A smooth left Haar system for the Lie groupoid $\Gamma \rightrightarrows \mathcal{A}$ is a family $\{\mu^x\}_{x \in \mathcal{A}}$ where each $\mu^x$ is a positive, regular Borel measure on the submanifold $\Gamma(x,-)$ such that: \\ (i) If $(N_x,\phi)$ is a $t$-fibrewise product open subset of $\mathcal{G}$, $N_x \simeq t(N_x) \times W$, and if $\mu_W = \mu|_W$ is Lebesgue measure on $\R^m$, then for each $x \in t(N_x)$, the measure $\mu^x \circ \phi^x$ is equivalent to $\mu_W$, since $\phi^x : N_x\cap \Gamma(x,-) \to \R^m$ is a diffeomorphism and their R-N derivative is the function $\Phi(x,w) = d(\mu^x \circ \phi^x)/d\mu_W(w)$ belonging to $C^\infty(t(N_x) \times W)$ and is strictly positive.\\
(ii) With $\Phi|_{\mathcal{A}}$, we have $\displaystyle \Phi_o(x) = \int_{\Gamma(x,-)}\Phi d\mu^x$, which belongs to $C_c(\mathcal{A})$.\\
(iii) For any $\gamma \in \Gamma$ and $f \in C^\infty_c(\Gamma)$, we have \[ \int_{\Gamma(s(\gamma),-)} f(\gamma\eta)d\mu^{s(\gamma)}(\eta) = \int_{\Gamma(t(\gamma),-)}f(\xi)d\mu^{t(\gamma)}(\xi) \]
\end{defx}
This definition makes a clear sense in the light of the smooth structure on $\mathcal{B}_\ell(\mathcal{G})$ subsequent on the smooth structure of $\Gamma \rightrightarrows \mathcal{A}$, for it follows the definition of the smooth chart $(N_x,\phi^x) \simeq W \subset \R^m$. So, for each $x \in N_x$, $\mu^x$ is positive and a smooth measure on $N_x\cap \Gamma(x,\cdot)$; and since $\phi^x : N_x \to W \subset \R^m$ is a diffeomorphism, $\mu^x \circ \phi^x \sim \mu_W$. Thus, the Radon-Nikodym derivative $\frac{d(\mu^x \circ \phi^x)}{d\mu_W}$ varies smoothly on $\Gamma \rightrightarrows \mathcal{A}$ by definition.

With these formulations, and given the normalized $\frac{1}{2}$-densities, where $|\Omega|^{1/2}_\gamma$ is the fibre over an arrow $\gamma \in \Gamma$  with $t(\gamma) = x, s(\gamma) = y$; a density $\phi \in C^\infty_c(\Gamma,\Omega(\Gamma))$  determines a functional $\Omega^kT_\gamma(\Gamma(x,\cdot))\otimes \Omega^kT_\gamma(\Gamma(\cdot,y)) \to \R$. The convolution algebra of the Lie groupoid $\Gamma \rightrightarrows \mathcal{A}$ is therefore defined on the space of sections (densities) $C^\infty_c(\Gamma,\Omega(\Gamma)) \subset L^2(\Gamma)$ of the line bundle, with the convolution product $f * g$ of $f,g \in C^\infty_c(\Gamma,\Omega(\Gamma))$ given as \[ f * g(\gamma) = \int_{\eta\circ \xi = \gamma}f(\eta)g(\xi) = \int_{\Gamma(t(\gamma),-)}f(\eta)g(\eta^{-1}\gamma). \eqno{(6)}\] The involution is defined as \[ f^*(\gamma) = f(\gamma^{-1}). \eqno{(7)}\]
The integral is that of sections on the manifold $\Gamma(t(\gamma),-)$ since $f(\eta)g(\eta^{-1}\gamma)$ is a 1-density. Based on Paterson's reformulation (cf. \cite{Paterson99}, Appendix F), the above is alternatively given as \[f * g(\gamma) = (\int \omega)(\omega_{s(\gamma)}\otimes \omega_{t(\gamma)}),  \eqno{(8)} \] with $f, g, f*g \in C_c^\infty(\Gamma, \Omega(\Gamma))$.

As noted above, the definition of $1/2$-densities makes the convolution algebra of the Lie groupoid $\Gamma \rightrightarrows \mathcal{A}$ independent of the choice of left Haar system, since they are intrinsic objects to the Lie groupoid. This also makes the representation of the Lie groupoid independent of the choice of smooth left Haar system. As stated above, the choice of left Haar system is made relative by the equivalence established on the space of densities $C^\infty(\Gamma,\Omega(\Gamma))$ by the action of the smooth net $\mathscr{K} \simeq [0,1)$ which we simply put as $(\Gamma^\Gamma)^I$. We give this as a proposition as follows.
\begin{prop}
The convolution algebra $C^\infty(\Gamma,\Omega(\Gamma))$ has a smoothing action of $\mathscr{K} \simeq [0,1)$.
\end{prop}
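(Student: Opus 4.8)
The plan is to exhibit the action explicitly as pullback along the net of smoothing diffeomorphisms $\lambda_\varepsilon\colon\Gamma\to\Gamma$, $\gamma\mapsto\gamma_\varepsilon$, produced in the Lemma on the connected component $\mathscr{K}$, and then to verify that this pullback is $C^\infty$ in $\varepsilon$ in the sense of the smoothness criterion for maps of locally convex spaces recalled above, is multiplicative for the convolution product (6), and is compatible with the involution (7). Because $1/2$-densities are intrinsic to the Lie groupoid, the resulting action is automatically independent of the chosen smooth left Haar system, which is the point of working with $C^\infty(\Gamma,\Omega(\Gamma))$ in the first place.

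First I would record what is needed about the net. The $\lambda_\varepsilon$ form a one-parameter family of fibre-preserving (local) diffeomorphisms of the open dense submanifold of arrows generated by the convex neighbourhoods $(x,x,\varepsilon)$, which realise the composition law of $\mathscr{K}\simeq[0,1)$, namely $\lambda_\varepsilon\circ\lambda_\delta=\lambda_{\varepsilon\cdot\delta}$, and --- this is the structural ingredient --- they are groupoid morphisms: $(\eta\circ\xi)_\varepsilon=\eta_\varepsilon\circ\xi_\varepsilon$, which is exactly the identity $(x,y,\varepsilon)\circ(y,z,\varepsilon)=(x,z,\varepsilon)$ built into $\mathcal{G}$, together with $(\gamma^{-1})_\varepsilon=(\gamma_\varepsilon)^{-1}$, while $\lambda_\varepsilon$ carries each $t$-fibre $\Gamma(t(\gamma),-)$ into itself, degenerating toward the units as the net converges to the identity arrows. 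Each $\lambda_\varepsilon$ then pulls back the $1/2$-density line bundle $\Omega(\Gamma)$ to itself, and I put
\[
(\varepsilon\cdot\phi)(\gamma)\;=\;\bigl|\det D\lambda_\varepsilon(\gamma)\bigr|^{1/2}\,\phi(\gamma_\varepsilon),\qquad \phi\in C^\infty_c(\Gamma,\Omega(\Gamma)),
\]
the exponent $1/2$ being forced by the $\theta$-density transition rule $|\det(\partial y/\partial x)|^{\theta}$ at $\theta=1/2$.

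Next I would check the three properties. For smoothness, in a $t$-fibrewise chart $(N_x,\phi^x)\simeq t(N_x)\times W\subset\R^m$ the quantity $|\det D\lambda_\varepsilon|^{1/2}\phi(\gamma_\varepsilon)$ is a composite of the smooth chart maps $\phi(x+ty)$ of (1) with the assignment $\varepsilon\mapsto x+\varepsilon y$, so it is continuous and all iterated directional derivatives in $\varepsilon$ and in the section exist and are continuous; this is precisely the net smoothness condition
\[
\Bigl(x_\varepsilon\to x,\ y_\varepsilon\to x,\ \tfrac{x_\varepsilon-y_\varepsilon}{\varepsilon}\to X,\ \varepsilon\to 0\Bigr)\to(x,X,0)
\]
recorded in the discussion following that Lemma, so the smoothness criterion applies. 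For multiplicativity, substituting $\gamma\mapsto\gamma_\varepsilon$ in (6) and using $(\eta\circ\xi)_\varepsilon=\eta_\varepsilon\circ\xi_\varepsilon$ so that $\lambda_\varepsilon$ carries the fibre of integration onto $\Gamma(t(\gamma_\varepsilon),-)$, while the two half-density Jacobians multiply to the full Jacobian that the change of variables in the Haar integral absorbs, one gets $\varepsilon\cdot(f*g)=(\varepsilon\cdot f)*(\varepsilon\cdot g)$. For the involution, $(\gamma^{-1})_\varepsilon=(\gamma_\varepsilon)^{-1}$ and (7) give $(\varepsilon\cdot f)^*=\varepsilon\cdot f^*$. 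Combined with $\varepsilon\cdot(\delta\cdot\phi)=(\varepsilon\cdot\delta)\cdot\phi$, this shows $\mathscr{K}\simeq[0,1)$ acts on $C^\infty(\Gamma,\Omega(\Gamma))$ by $^*$-algebra endomorphisms; the action is \emph{smoothing} because it is the one induced from the smoothing net-action of $[0,1)$ on the arrows $\Gamma$ that gave $\mathcal{A}$ its smooth structure, and each $\varepsilon\cdot\phi$ is supported in the open dense submanifold while $\varepsilon\mapsto\varepsilon\cdot\phi$ is smooth.

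The step I expect to be the real obstacle is the smoothness verification: one must show that the half-density Jacobian $|\det D\lambda_\varepsilon(\gamma)|^{1/2}$ depends $C^\infty$ jointly on $\varepsilon$ and $\gamma$ for the locally convex topology on $C^\infty_c(\Gamma,\Omega(\Gamma))$, and that differentiation under the integral sign in (6) is legitimate. For this the local convexity of $\mathcal{A}$, the continuity of the partial multiplication defining $\Gamma$, and the strict positivity and smoothness of the Radon--Nikodym densities $\Phi(x,w)$ of the smooth Haar system must be used together rather than separately; once that is in place the multiplicativity and $^*$-compatibility reduce to a change-of-variables bookkeeping in the Haar integral, and the action axioms for $\mathscr{K}\simeq[0,1)$ follow directly from its composition law.
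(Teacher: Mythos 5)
Your route is entirely different from the paper's. The paper's proof is two sentences: it observes that the density line bundle is trivial, $\Omega(\Gamma)\cong\Gamma\times\R$, so that $C^\infty_c(\Gamma,\Omega(\Gamma))$ is identified with $C^\infty_c(\Gamma)$, and then simply asserts that these smooth functions ``are the bisections with the action of $[0,1)$,'' giving a sectional dynamics on the $t$-fibres. It constructs no explicit action, checks no compatibility with the convolution product or the involution, and makes no smoothness verification. You, by contrast, try to build the action concretely as a half-density pullback along a net of maps $\lambda_\varepsilon$ and to prove it acts by $^*$-algebra endomorphisms. That is a much more substantive (and more honest) undertaking than what the paper records, and your observation that the joint smoothness of $|\det D\lambda_\varepsilon|^{1/2}$ and differentiation under the integral in (6) are the real work is well taken.

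However, your central structural ingredient has a genuine gap. The only concrete $[0,1)$-action on arrows available from the Lemma is the fibrewise scaling $(x,y)\mapsto(x,\varepsilon y)$ on $\Gamma(x,-)$, which is well defined because $\Gamma(x,\cdot)$ is a linear subspace. But this map is \emph{not} a groupoid morphism: if $\eta=(x,y)$ and $\xi=(y,z)$ are composable, then $\eta_\varepsilon=(x,\varepsilon y)$ and $\xi_\varepsilon=(y,\varepsilon z)$ have $s(\eta_\varepsilon)=\varepsilon y\neq y=t(\xi_\varepsilon)$ for $\varepsilon\neq 1$, so $\eta_\varepsilon\circ\xi_\varepsilon$ is undefined and the identity $(\eta\circ\xi)_\varepsilon=\eta_\varepsilon\circ\xi_\varepsilon$ fails. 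The identity $(x,y,\varepsilon)\circ(y,z,\varepsilon)=(x,z,\varepsilon)$ that you cite in support is a different statement: it is composition inside the enlarged groupoid $\Gamma\times[0,1)$ at a \emph{fixed} parameter $\varepsilon$ that is merely carried along, not a claim that the passage $\gamma\mapsto\gamma_\varepsilon$ intertwines compositions. Since your proofs of multiplicativity for the convolution product and of compatibility with the involution both rest on $(\eta\circ\xi)_\varepsilon=\eta_\varepsilon\circ\xi_\varepsilon$ and $(\gamma^{-1})_\varepsilon=(\gamma_\varepsilon)^{-1}$, those steps do not go through as written. To repair the argument you would either have to produce a genuinely morphic family $\lambda_\varepsilon$ (which the paper never supplies), or drop the endomorphism requirement and settle for the weaker claim the proposition literally states --- a smoothing action on the linear space of sections --- which is closer to what the paper's own (assertion-level) proof delivers.
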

\begin{proof}
Because $\Omega(\Gamma)_\gamma$ are trivial line bundles on $\Gamma$, we have $\Omega(\Gamma) \cong \Gamma \times \R = \R\Gamma$ (or $\Gamma \times \mathbb{C} = \mathbb{C}\Gamma$). Therefore, $C^\infty_c(\Gamma)$ can be identified with the space of \emph{smooth sections} $C^\infty_c(\Gamma,\Omega(\Gamma))$ which are the smooth functions $\Gamma \to \Omega(\Gamma) \simeq \R\Gamma$; i.e. the bisections with the action of $[0,1)$. (cf. \cite{Husemoller94}). This gives a sectional dynamics on the $t$-fibres.
\end{proof}
The equivalence defined by this action makes the choice of left Haar system of measures defined by the smooth functions on the arrows not to be unique. This was what Paterson \cite{Paterson99} meant by positing that Alain Connes' approach to the convolution algebra in \cite{Connes94} makes the definition of the convolution algebra, and the representations (the unitary and irreducible) of a Lie groupoid $\mathcal{G}$ independent of the choice of smooth left Haar system. The above proposition says that the infinitesimal approach is equivalent to smoothing net $\mathscr{K}$-action on the space of local bisections isomorphic to the arrows in $\Gamma(x,-)$.

\subsection{Infinite dimensionality of the Representation Space}
Let $\nu$  be a probability measure on $\mathcal{A}$. Then a suitable Hilbert space $\mathcal{H}$ for the representation of the Lie groupoid $\Gamma \rightrightarrows \mathcal{A}$ arising from the locally convex partial $^*$-algebra $(\mathcal{A},\Gamma, \cdot,*,\tau)$ is the Hilbert bundle defined as a triple $(\mathcal{A},\mathcal{H},\nu)$, where $\mathcal{A}$ is the locally convex Hausdorff space, and $\nu$ is an invariant or quasi-invariant probability measure on $\mathcal{A}$, and $\mathcal{H}$ is the collection of Hilbert spaces $\{H_x = L^2(\Gamma(x,-),\mu^x)\}$ indexed by the elements of $\mathcal{A}$, which is a Hilbert bundle over $\mathcal{A}$.

The definitions of sections of the Hilbert bundle $\mathcal{H}$ and their nets which determine the inner product norm on the bundle are done in accordance with these formulations.  The identification of the arrows with the local bisections modifies the definition of the unitary operators $\ell(\varphi) : H_{s(\gamma)} \to H_{t(\gamma)}$. Subsequently, a local bisection $\varphi \in \mathcal{B}_\ell(\mathcal{G})$ can also be used to define the $C^*$-representation given by the (densities) map $\gamma \mapsto f(\gamma)\ell(\gamma)$ for each $x \in \mathcal{A}, f \in C_c(\Gamma)$. In this case, the map $\varphi \mapsto f(\gamma)\varphi$ is a section of the Hilbert bundle (cf. \cite{Paterson99}). Thus, given a trivialization $U$ at $x \in \mathcal{A}$, the Hilbert space $H_x = L^2(\Gamma(x,-),\mu^x)$ can also be given in terms of the local bisection $H_x = L^2(\mathcal{B}_\ell(U),\mu^x)$.

The image of the sections are functions defined on the arrows terminating at $x \in \mathcal{A}$, which are square integrable. The net of smooth sections follows on the net of local bisections which has a smooth structure (of a Lie pseudogroup) as compared to the sequence of sections defined in \cite{Paterson99}. The inner product $\langle \varphi_1(x),\varphi_2(x) \rangle$ is the diffeomorphism-invariant product of two local bisections defined as $$\displaystyle \int (\varphi_1 \star \varphi_2)(x)d\nu(x) = \int \varphi_1(t \circ \varphi_2(x))\varphi_2(x)d\nu(x).$$ This takes care of the convolution of two densities and the diffeomorphism invariance of integration on a smooth manifold. Following \cite{Paterson99}, a fundamental net is therefore defined as follows.
\begin{defx}
A net $(\varphi_n)$ of sections is said to be \emph{fundamental} if for each pair of indices $m,n$ the function $\displaystyle x \mapsto \langle \varphi_m(x),\varphi_n(x) \rangle = \int \varphi_m(t\circ \varphi_n(x))\varphi_n(x)d\nu(x)$ is $\nu$-measurable on $\mathcal{A}$; and for each $x \in \mathcal{A}$, the images $\varphi_n(x)$ of the net span a dense subspace of $H_x = L^2(\Gamma(x,-),\mu^x)$.
\end{defx}
\begin{prop}
A net $(\varphi_n)$ of local bisection $B_\ell(U)$ of $\Gamma \rightrightarrows \mathcal{A}$ is fundamental.
\end{prop}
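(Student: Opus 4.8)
The plan is to check directly the two conditions in the definition of a fundamental net: the $\nu$-measurability of the pairing $x \mapsto \langle \varphi_m(x),\varphi_n(x)\rangle$, and the totality of the images $\{\varphi_n(x)\}$ in each fibre $H_x = L^2(\Gamma(x,-),\mu^x)$.

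First I would dispose of the measurability condition. Each local bisection $\varphi_n \in \mathscr{B}_\ell(U)$ is, by the preceding subsection, a smooth section of the Lie groupoid $\Gamma \rightrightarrows \mathcal{A}$, and $t : \Gamma(-,x) \to \mathcal{A}$ is a surjective submersion; hence $x \mapsto \varphi_m(t\circ\varphi_n(x))\,\varphi_n(x)$ is a composition of smooth maps and is continuous on the trivialization $U$. Since the inner product $\langle \varphi_m(x),\varphi_n(x)\rangle = \int \varphi_m(t\circ\varphi_n(x))\varphi_n(x)\,d\nu(x)$ is obtained from this continuous integrand by integrating against the Borel probability measure $\nu$ on the Hausdorff space $\mathcal{A}$, the resulting function of $x$ is continuous, in particular $\nu$-measurable. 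The only care needed is that the diffeomorphism-invariant convolution product used in the pairing is built entirely from $t\circ\varphi$ and integration, so smoothness of the bisections propagates to it.

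The substantive part is the totality condition. Fix $x \in \mathcal{A}$. Every element of the $t$-fibre $\Gamma(x,-)$ is an arrow $(x,y)$ with $y \in \Gamma(x,-) \subseteq \mathcal{A}$, and because $t$ is a submersion, through each such arrow there passes a local bisection; under the bijection $\varphi \mapsto t\circ\varphi$, $\mathscr{B}_\ell(U) \simeq \text{Diff}(N_x)$, these exhaust the fibre. Using the identification $C^\infty_c(\Gamma) \cong C^\infty_c(\Gamma,\Omega(\Gamma))$ of densities with sections given by local bisections from the previous subsection, the linear span of $\{\varphi_n(x)\}$ contains $C^\infty_c(\Gamma(x,-),\Omega^{1/2})$ once the net is indexed richly enough --- e.g. by pairs $(\varepsilon,V)$ with $\varepsilon \in [0,1)$ and $V$ ranging over a neighbourhood basis --- so that, by the smoothing $\mathscr{K} \simeq [0,1)$-action, $\varphi_{(\varepsilon,V)}(x)$ runs over arrows converging to any prescribed arrow in $\Gamma(x,-)$ as $\varepsilon \to 0$ (the net version of Connes' sequential condition recalled above). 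Finally, $C^\infty_c(\Gamma(x,-),\Omega^{1/2})$ is dense in $H_x = L^2(\Gamma(x,-),\mu^x)$ by the intrinsic $L^2$-space property $|\Omega|^2(\Gamma) \subset L^2(\Gamma)$ established earlier, so the span of $\{\varphi_n(x)\}$ is dense in $H_x$, as required.

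The main obstacle is the totality condition, and within it the passage from the geometric picture (local bisections through every arrow) to the Hilbert-space statement: one must simultaneously ensure that the net is large enough that $\{\varphi_n(x)\}$ is total in $H_x$ for \emph{every} $x \in \mathcal{A}$, which is why the net is indexed by the pair $(\varepsilon,V)$ rather than a single parameter, and one leans on the smooth $\mathscr{K}$-action to obtain the required convergence uniformly over the base. The measurability condition, by contrast, is routine once smoothness of the bisections is invoked.
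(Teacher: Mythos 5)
Your proof is correct in substance and reaches the same conclusion, but the mechanism you use for the density step is genuinely different from the paper's. The paper's proof is a three-line sketch: it asserts that bisections are sections satisfying the definition (thereby disposing of measurability without argument), and then obtains totality of $\{\varphi_n(x)\}$ in $H_x = L^2(\Gamma(x,-),\mu^x)$ from the left translations $L_\varphi$ associated to bisections, the openness of the target map $t : \Gamma(x,-) \to \mathcal{A}$, and the transitivity of the $\Gamma$-action on the fibre (citing Mackenzie's result that the restriction of a left translation by a bisection is open). You instead treat measurability explicitly (continuity of the integrand $x \mapsto \varphi_m(t\circ\varphi_n(x))\varphi_n(x)$ as a composition of smooth maps), and you get totality by re-indexing the net by pairs $(\varepsilon,V)$ so that the $\mathscr{K}\simeq[0,1)$-action produces arrows converging to any prescribed arrow of $\Gamma(x,-)$, and then invoking the density of $C^\infty_c(\Gamma(x,-),\Omega^{1/2})$ in the intrinsic $L^2$-space. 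What your route buys is an honest acknowledgement of a real imprecision in the statement: as written, an \emph{arbitrary} net of local bisections (e.g.\ a constant net) cannot be fundamental, and your requirement that the net be ``indexed richly enough'' supplies exactly the hypothesis the paper leaves implicit; the paper's appeal to transitivity of the $\Gamma$-action quietly assumes the net realizes enough of that action. What the paper's route buys is brevity and a direct citation trail to Mackenzie's left-translation machinery, which is the standard way this density claim is justified for locally compact groupoids in Paterson. Neither argument is fully rigorous about \emph{uniformity over all} $x \in \mathcal{A}$ when the fibres $\Gamma(x,-)$ vary in dimension, but yours at least isolates where the extra hypothesis must enter.
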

\begin{proof}
First, bisections $\varphi$ for the Lie groupoids are sections satisfying the definition above. Second, since bisections are used to defined left translations $L_\varphi$ on a Lie groupoid (see \cite{Mackenzie2005}), the image of the net $(\varphi_n)$ span a dense subspace of the Hilbert space $H_x = L^2(\Gamma(x,-),\mu^x)$ by the openness of the target map $t : \Gamma(x,-) \to \mathcal{A}$ which forms a net of (local) diffeomorphisms $t \circ \varphi_n$. Finally, the transitive action of $\Gamma$ on $\Gamma(x,-)$ also points to the denseness of the span of a net of bisections, for a restriction of the left translation by a bisection is open as stated in $(1.4)$ of \cite{Mackenzie2005}.
\end{proof}
\begin{rem}(see \cite{Paterson99})
The above result points to the relation between elements of the base space $x \in \mathcal{A}$ and the fundamental nets $(\varphi_n)$, such that:\\
(1) The smooth (fundamental) net $(\varphi_n)$ can be considered the orthonormal basis of the bundle since the Gram-Schmidt process can be used to convert the image of the net $\{\varphi_n(x)\}$ to an orthonormal basis for $H_x = L^2(\Gamma(x,-),\mu^x)$ for each $x \in \mathcal{A}$. \\
(2) A section $\varphi$ is measurable when the action of a fundamental net $(\varphi_n)$ on it by inner product is measurable; that is,  each function $x \mapsto \langle \varphi(x), \varphi_n(x) \rangle$ of the net is measurable. It follows that the smooth fundamental net $(\varphi_n)$ defines the notion of measurability for sections. This extends the connection established between local bisections and the sections of the Hilbert bundle $\mathcal{H}$ to the sections of line bundle defining densities.\\
(3) Thus, the Hilbert bundle $\mathcal{H} = L^2(\mathcal{A},\{H_x\},\nu)$ is the space of measurable sections $\varphi$ with a relation $\sim$ defined by the convergence of nets $\varphi_n$, which defines a $\nu$-integrable function $x \mapsto ||\varphi(x)||^2_2$, with inner product $\displaystyle \langle \varphi,\psi \rangle = \int_{\mathcal{A}}\langle \varphi(x),\psi(x) \rangle d\nu(x)$. The related norm is $\displaystyle ||\varphi||^2 = \int_{\mathcal{A}} \langle \varphi(x),\varphi(x) \rangle d\nu(x)$, while the $L^2$-norm is $\displaystyle ||\varphi||^2_2 = \int_{\mathcal{A}}||\varphi(x)||^2d\nu(x)$. The net convergence constitutes the measurable sections as generalized quantities, as defined in \cite{Marsden68}. This gives the following corollary.
\end{rem}
\begin{coro}
The relation $\sim$ defined by the net $\varphi_n$ constitutes a class $[\varphi]$ of sections with an action of the smooth net $\mathscr{K}$.
\end{coro}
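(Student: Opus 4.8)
The plan is to proceed in three stages: first to verify that $\sim$ is genuinely an equivalence relation, so that passing to the quotient produces the classes $[\varphi]$; second to transport the smoothing action of $\mathscr{K}\simeq[0,1)$ on the convolution algebra $C^\infty_c(\Gamma,\Omega(\Gamma))$ to the space of sections of the Hilbert bundle $\mathcal{H}$; and third to check that this action descends to the quotient, thereby equipping each $[\varphi]$ with an action of $\mathscr{K}$.

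First I would recall from Remark~(3) that, for a fundamental net $(\varphi_n)$, the relation $\varphi\sim\psi$ means precisely that $\|\varphi-\psi\|_2^2=\int_{\mathcal{A}}\|\varphi(x)-\psi(x)\|^2\,d\nu(x)=0$, i.e. $\varphi(x)=\psi(x)$ for $\nu$-almost every $x$. Reflexivity (take the constant net), symmetry, and transitivity (the triangle inequality in each $H_x$, integrated against $\nu$) are then immediate, so the quotient of the space of measurable sections by $\sim$ is well defined and its elements are exactly the classes $[\varphi]$, which are the points of $\mathcal{H}=L^2(\mathcal{A},\{H_x\},\nu)$. By Remarks~(1)--(2) and the preceding Proposition, the class $[\varphi]$ is moreover detected by its pairings $x\mapsto\langle\varphi(x),\varphi_n(x)\rangle$ against a fundamental net of local bisections, whose fibrewise images can be Gram--Schmidt orthonormalized to a basis of each $H_x$; hence $[\varphi]$ is completely determined by these coordinates.

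Next I would use the Proposition that the convolution algebra $C^\infty_c(\Gamma,\Omega(\Gamma))$ carries a smoothing action of $\mathscr{K}\simeq[0,1)$, together with the density $C^\infty_c(\Gamma,\Omega(\Gamma))\subset L^2(\Gamma)$ established via the intrinsic $L^2$-space: the smoothing $\varphi\mapsto\varepsilon\cdot\varphi$, induced fibrewise by $(x,y,\varepsilon)\mapsto(x,\exp_x(-\varepsilon X),\varepsilon)$ from the earlier Proposition (with $\varepsilon=0$ acting through the objection $x\mapsto(x,x)$ as the identity), extends by continuity from the dense subspace of smooth densities to $L^2(\Gamma)$, hence fibrewise to each $H_x=L^2(\Gamma(x,-),\mu^x)$, and therefore to the sections of $\mathcal{H}$. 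Because $\mathscr{K}$ is generated by the net of local bisections acting as diffeomorphisms of $\mathcal{A}$ that preserve the partition under $\Gamma$, and $\nu$ is quasi-invariant, each $\varepsilon$ acts on $\mathcal{H}$ by a bounded operator, indeed a unitary after multiplication by the square root of the associated Radon--Nikodym cocycle.

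Finally I would check descent: if $\varphi\sim\psi$, i.e. $\varphi=\psi$ $\nu$-a.e., then since $\varepsilon$ acts through a $\nu$-null-set-preserving transformation composed with a bounded fibrewise operator, $\varepsilon\cdot\varphi=\varepsilon\cdot\psi$ $\nu$-a.e., so $\varepsilon\cdot\varphi\sim\varepsilon\cdot\psi$; equivalently, continuity of $\varepsilon\cdot(-)$ turns $\varphi_n\to\varphi$ into $\varepsilon\cdot\varphi_n\to\varepsilon\cdot\varphi$, so the limit class is unambiguous. Hence $\varepsilon\cdot[\varphi]:=[\varepsilon\cdot\varphi]$ is well defined, $0\cdot[\varphi]=[\varphi]$, and the composition law for the smoothings matches the structure of $[0,1)\simeq\mathscr{K}$, which is exactly an action of $\mathscr{K}$ on $[\varphi]$. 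The main obstacle is this last well-definedness point, namely verifying that the smoothing $\mathscr{K}$-action preserves $\nu$-null sets (equivalently, that $\nu$ is quasi-invariant under the generating net of bisections) and that it extends continuously from the dense smooth densities to all of $L^2$; once those are in hand the rest is bookkeeping.
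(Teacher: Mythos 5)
Your proposal is correct in substance but reads the statement differently from the paper, and it is worth noting the difference. The paper's own proof is a three-sentence assertion whose key move is that the class $[\varphi]$ is \emph{constituted by} the net $(\varphi_n)$ itself, the net being the orbit of a single local bisection $\varphi$ under the convolution action of $\mathscr{K}\simeq[0,1)$; measurability and the class structure are then read off from the inner-product pairings $x\mapsto\langle\varphi(x),\varphi_n(x)\rangle$ and the density of the span of $\{\varphi_n(x)\}$ in each $H_x$. You instead take $\sim$ to be $\nu$-almost-everywhere equality (equivalently vanishing of $\|\varphi-\psi\|_2$), form the quotient first, and then show the smoothing action descends to it, which requires the extra verifications you flag: continuity of the extension from the dense smooth densities $C^\infty_c(\Gamma,\Omega(\Gamma))$ to $L^2$, quasi-invariance of $\nu$ under the generating bisections, and preservation of null sets. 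What your route buys is a well-posed action on honest equivalence classes with the well-definedness actually checked -- none of which the paper does -- at the cost of importing hypotheses (quasi-invariance, the Radon--Nikodym cocycle) that the paper only introduces later in its representation-theory section. What the paper's route buys is brevity and a closer match to its stated intent that the class ``is'' a convergent net generated by the $\mathscr{K}$-action; but it never verifies that $\sim$ is an equivalence relation nor that the limit section is independent of the representative, which are precisely the points you supply. Your descent argument is the sounder of the two, and the obstacles you identify at the end are real ones that the paper passes over in silence.
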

\begin{proof}
This follows from the convolution action of $\mathscr{K}$ on the local bisection $\varphi$ which defines the net of bisections $(\varphi_n)$. Thus, the class $[\varphi]$ of a section $\varphi$ has the inner product action $\langle \varphi(x),\varphi_n(x) \rangle$ of a fundamental net $(\varphi_n)$ in terms of measurability. Thus, the fundamental net $(\varphi_n)$ spans a dense subspace of each $L^2(\Gamma(x,-),\mu^x)$ on any $x \in \mathcal{A}$, and defines the classes of sections of the Hilbert bundle $\mathcal{H} = L^2(\mathcal{A},\{H_x\},\nu)$, with each class a net $\varphi_n$ of sections converging to a measurable section $\psi$.
\end{proof}
As was noted earlier, the fibres are not all of same dimension for the Lie groupoid $\mathcal{G} = \Gamma \rightrightarrows \mathcal{A}$; the right multipliers $\Gamma(x,-)$ and left multipliers $\Gamma(-,x)$ are not the same for every $x \in \mathcal{A}$. We are interested in the infinite dimensional fibres which are inductive limit of these fibres. Thus, we have a series of containment for the Hilbert space $H_x = L^2(\Gamma(x,-),\mu^x) : x \in \mathcal{A}$, such that $H_{x_1} \supset H_{x_2} \supset \cdots \supset L^2(\Gamma(x,x),\mu^x)$, where the right multipliers are also ordered $\Gamma(x_1,-) \supset \Gamma(x_2,-) \supset \cdots $ (and respectively the left multipliers $\Gamma(-,x_1) \supset \Gamma(-,x_2) \supset \cdots $). We need the following proposition to give the result on the inductive system of locally convex Lie groupoids.
\begin{prop}\cite{Mackenzie2005}
Let $\Gamma \rightrightarrows \mathcal{A}$ be a Lie groupoid, and $\sigma : U \to V$ be a diffeomorphism from $U \subset \Gamma$ open to $V \subseteq \Gamma$ open, and let $f : B \to C$ be a diffeomorphism from $t(U) = B \subseteq \mathcal{A}$ to $t(V) = C \subseteq \mathcal{A}$, such that $s\circ \sigma = s, t\circ \sigma = f\circ \sigma$ and $\sigma(\gamma \eta) = \sigma(\gamma)\eta$ whenever $(\gamma,\eta) \in \Gamma * \Gamma, \gamma \in U$ and $\gamma \eta \in U$. Then $\sigma$ is the restriction to $U$ of a unique local translation $L_\varphi : \Gamma(B,-) \to \Gamma(C,-)$ where $\varphi \in \mathcal{B}_\ell(U)$.
\end{prop}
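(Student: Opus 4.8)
The plan is to recover the local bisection $\varphi$ explicitly from $\sigma$, to check that it is smooth, and then to verify that $\sigma=L_\varphi|_U$ and that $\varphi$ is the only bisection with this property.

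First I would define the candidate bisection. Since $t:\Gamma\to\mathcal{A}$ restricts to a surjective open submersion $U\to B$, every $x\in B$ is the target of some arrow $\gamma\in U$, and I set $\varphi(x):=\sigma(\gamma)\cdot\gamma^{-1}$, the product being defined because $s\circ\sigma=s$ gives $s(\sigma(\gamma))=s(\gamma)=t(\gamma^{-1})$. This is independent of the chosen $\gamma$: if $\gamma_1,\gamma_2\in U$ both have target $x$, then $\eta:=\gamma_1^{-1}\gamma_2$ is composable with $\gamma_1$ and $\gamma_1\eta=\gamma_2\in U$, so the right-equivariance $\sigma(\gamma\eta)=\sigma(\gamma)\eta$ yields $\sigma(\gamma_2)=\sigma(\gamma_1)\gamma_1^{-1}\gamma_2$, whence $\sigma(\gamma_2)\gamma_2^{-1}=\sigma(\gamma_1)\gamma_1^{-1}$. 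Thus $\varphi:B\to\Gamma$ is well defined on the open set $B=t(U)$.

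Next I would check the groupoid identities. From $s\circ\sigma=s$ one gets $s(\varphi(x))=s(\gamma^{-1})=t(\gamma)=x$, so $s\circ\varphi=I_B$; and from the target condition $t\circ\sigma=f\circ t$ on $U$ one gets $t(\varphi(x))=f(x)$, so $t\circ\varphi=f$ is the prescribed diffeomorphism $B\to C$. It then follows formally that $L_\varphi(\eta):=\varphi(t(\eta))\cdot\eta$ carries $\Gamma(B,-)$ into $\Gamma(C,-)$ (since $t(L_\varphi(\eta))=f(t(\eta))\in C$), that it restricts to $\sigma$ on $U$ (since $L_\varphi(\gamma)=\sigma(\gamma)\gamma^{-1}\gamma=\sigma(\gamma)$), and that it is invertible with inverse $L_{\varphi^{-1}}$, where $\varphi^{-1}$ is the local bisection over $C$ built from $f^{-1}$ and the pointwise inversion $i$. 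Uniqueness is right cancellation: if $L_{\varphi'}|_U=\sigma$ as well, then $\varphi'(t(\gamma))\,\gamma=\sigma(\gamma)=\varphi(t(\gamma))\,\gamma$ for every $\gamma\in U$, and cancelling $\gamma$ gives $\varphi'=\varphi$ on $B=t(U)$.

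The genuinely analytic point — the main obstacle — is the smoothness of $\varphi$. Here I would use the Lie groupoid structure on $\Gamma\rightrightarrows\mathcal{A}$ modelled on the locally convex space $\mathcal{A}$ established above, so that the submersion $t$ admits, near any $x_0\in B$, a smooth local section $\beta$ with values in $U$, obtained by shrinking one of the charts $\phi(x+ty)$ of (1) around a point of $U$ lying over $x_0$. On that neighbourhood $\varphi=m\circ(\sigma\circ\beta,\,i\circ\beta)$ is a composite of the smooth maps $\sigma$, $\beta$, the inversion $i$ and the partial composition $m$, hence smooth; since $x_0\in B$ was arbitrary, $\varphi$ is smooth on $B$, and therefore $\varphi\in\mathcal{B}_\ell(U)$. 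Checking that $\beta$ can indeed be chosen inside $U$, and that all products occurring along the way stay in the composable set, is the delicate step in the locally convex setting, but it is supplied precisely by the manifold structure and the openness of $t$; the remainder is the formal groupoid computation of \cite{Mackenzie2005}.
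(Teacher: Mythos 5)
The paper does not prove this proposition at all --- it is imported verbatim from \cite{Mackenzie2005} with only the citation standing in for a proof --- so there is nothing internal to compare against. Your reconstruction is correct and is essentially Mackenzie's own argument: defining $\varphi(x)=\sigma(\gamma)\gamma^{-1}$, checking independence of $\gamma$ via the right-equivariance hypothesis, verifying $s\circ\varphi=I_B$ and $t\circ\varphi=f$, recovering $\sigma=L_\varphi|_U$, and getting uniqueness by right cancellation; the smoothness step via local sections of the submersion $t$ is also the standard one. One small point worth flagging: you silently (and correctly) read the paper's condition $t\circ\sigma=f\circ\sigma$ as the typo it must be for $t\circ\sigma=f\circ t$, since $f\circ\sigma$ does not even typecheck ($\sigma$ lands in $\Gamma$ while $f$ is defined on a subset of $\mathcal{A}$); it would be worth stating that correction explicitly rather than leaving it implicit.
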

\begin{prop}
The orbits of $\Gamma$ by the left translation of a bisection $L_\varphi$ defines inductive system of Lie groupoids, which reflects as a connected path on $\mathcal{A}$.
\end{prop}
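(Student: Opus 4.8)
The plan is to realise the orbits of $\Gamma \rightrightarrows \mathcal{A}$ as the objects of a directed system whose bonding morphisms are the local left translations $L_\varphi$ furnished by the preceding proposition, and then to extract the connectedness assertion from the smooth net $\mathscr{K}\simeq[0,1)$ acting on the fibres.

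First I would fix the directed index set. Since $\Gamma=\{(x,y):x\cdot y\in\mathcal{A}\}$ is the equivalence relation of the partial $^*$-algebra, the orbit of a point $x$ under $\Gamma\rightrightarrows\mathcal{A}$ is the class $\mathcal{O}_x=t(s^{-1}(x))=\Gamma(-,x)\subseteq\mathcal{A}$, and the restriction $\Gamma|_{\mathcal{O}_x}\rightrightarrows\mathcal{O}_x$ is a transitive Lie subgroupoid of $\Gamma$ with isotropy $\mathcal{G}(x,x)$. Using the nesting $\Gamma(-,x_1)\supseteq\Gamma(-,x_2)\supseteq\cdots$ recorded above (with the maximal, dense fibres occurring exactly for the left multipliers $x\in\Gamma(-,\mathcal{A})$), I would preorder $\mathcal{A}$ by $x_i\preceq x_j\iff \Gamma(-,x_i)\subseteq\Gamma(-,x_j)$; this is directed because a left multiplier absorbs everything below it. For each pair $x_i\preceq x_j$ one picks a local bisection $\varphi_{ij}\in\mathcal{B}_\ell(U)$ carrying a trivialization at $x_i$ into one at $x_j$; by the preceding proposition its left translation $L_{\varphi_{ij}}:\Gamma(B_i,-)\to\Gamma(B_j,-)$ is the unique Lie groupoid morphism restricting the connecting diffeomorphism $\sigma$, and it intertwines source and target by construction. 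These $L_{\varphi_{ij}}$ are the bonding maps.

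The second step is to verify the inductive-system axioms. The translation by the identity bisection is the identity, and $L_{\varphi_{jk}}\circ L_{\varphi_{ij}}=L_{\varphi_{ij}\star\varphi_{jk}}=L_{\varphi_{ik}}$ is precisely the homomorphism property of the correspondence $\varphi\mapsto t\circ\varphi$ together with closure of $\mathcal{B}_\ell(U)$ under the partial composition $\star$ (the Lie pseudogroup structure established earlier); cocycle coherence then follows from associativity of $\star$. Hence $\{\Gamma|_{\mathcal{O}_x}\rightrightarrows\mathcal{O}_x\}_{x\in\mathcal{A}}$ with bonding maps $\{L_{\varphi_{ij}}\}$ is an inductive system of Lie groupoids, whose inductive limit is the open, dense maximal groupoid over $\Gamma(-,\mathcal{A})$ carrying the unbounded infinitesimal generators described above.

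Finally, for the "connected path" clause I would invoke the net $\mathscr{K}\simeq[0,1)$: for each arrow $(x,y)\in\Gamma$ the net $\varepsilon\mapsto(x,y_\varepsilon,\varepsilon)$ with $y_\varepsilon\to x$ is a smooth path in $\Gamma(x,-)\times[0,1)$ from $(x,y)$ to the unit $(x,x)$, whose image under $t$ (equivalently under $\varphi\mapsto t\circ\varphi$) joins $y$ to $x$ inside $\mathcal{O}_x$; concatenating such paths along the bonding maps $L_{\varphi_{ij}}$ shows every orbit is path-connected and that each transition $\Gamma|_{\mathcal{O}_{x_i}}\to\Gamma|_{\mathcal{O}_{x_j}}$ is realised by a continuous path in $\mathcal{A}$, so that the inductive system of orbit groupoids reflects as a connected path on $\mathcal{A}$. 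I expect the main obstacle to be the second step: checking that the $L_{\varphi_{ij}}$ compose coherently as morphisms of \emph{Lie} groupoids (not merely as partial diffeomorphisms of open sets) and that the chosen preorder is genuinely directed — this is where one must lean hardest on the Lie pseudogroup property of $\mathcal{B}_\ell(U)$ and on the uniqueness clause of the preceding proposition.
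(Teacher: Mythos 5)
Your proposal follows essentially the same route as the paper: both realise the statement by exhibiting a directed system of Lie subgroupoids $\Gamma_\alpha \rightrightarrows \mathcal{A}_\alpha$ whose bonding maps are the left translations $L_\varphi$ by local bisections supplied by the preceding proposition, and both reduce the verification to the cocycle conditions $\sigma_{\gamma\alpha} = \sigma_{\gamma\beta}\circ\sigma_{\beta\alpha}$, $\sigma_\beta\circ\sigma_{\beta\alpha} = \sigma_\alpha$ inherited from the pseudogroup structure of $\mathcal{B}_\ell(\mathcal{G})$. If anything you are more explicit than the paper on two points it leaves implicit: the concrete directed order by inclusion of fibres $\Gamma(-,x_i)\subseteq\Gamma(-,x_j)$ (the paper only posits an abstract directed set $I$), and the ``connected path'' clause, which you derive from the $[0,1)$-net $\varepsilon\mapsto(x,y_\varepsilon,\varepsilon)$ while the paper's proof does not address that clause at all.
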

\begin{proof}
Since the partial algebra structure is encoded by the local bisections $\mathcal{B}_\ell(\Gamma)$, given that the target map $t$ restricted to a source fibre $t_x : \Gamma(-,x) \to \mathcal{A}$ is of constant rank, it gives a diffeomorphism $t \circ \varphi : \mathcal{A} \to \mathcal{A}$. This means that the left translation $L_\varphi$ is transitive on the fibre. Hence, there exists $C \subset \mathcal{A}$ such that $\Gamma(C,x)$ is closed under left multiplication or translation.

By definition, $\Gamma$ defines a relation on $\mathcal{A}$. A conjugate class $[x]$ of $x \in \mathcal{A}$ by this relation is the set $\{y \in \mathcal{A} : x\cdot y \in [x] \subseteq \mathcal{A}, (x,y) \in \Gamma \}$; and $x \cdot x \in [x]$. This defines an inductive system of locally convex partial $^*$-algebras as follows. Let $I$ be a directed set, with order denoted by $\gg$. Given the Lie groupoid $\Gamma \rightrightarrows \mathcal{A}$, we define a net of subsets of $\Gamma$ (arrows) $\{\Gamma_\alpha\}_{\alpha \in I}$ of the Lie groupoid such that the following are satisfied. \\
(1) $\Gamma_\alpha \subset \Gamma$, for each $\alpha$, and $\Gamma_\alpha \subseteq \Gamma_\beta$ for $\beta \gg \alpha$ and $\mathcal{A}_\alpha \neq \mathcal{A}_\beta, \alpha \neq \beta$;\\ (2) If $\alpha, \beta \in I$ and $\beta \gg \alpha$, then $\Gamma_\alpha \rightrightarrows \mathcal{A}_\alpha$ is a subgroupoid or a restriction of the Lie groupoid for $\mathcal{A}_\alpha \subset \mathcal{A}$.\\
(3) The natural homomorphic embeddings of $\mathcal{A}_\alpha \to \mathcal{A}$ and $\mathcal{A}_\alpha \to \mathcal{A}_\beta$ is by the extension of a left translation by $\varphi$; that is, $L_\varphi(\gamma) = \sigma(\gamma)$, where $\sigma$ is a restriction of the left translation $L_\varphi$, $\varphi \in \mathcal{B}_\ell(\Gamma)$. Similarly, we can write the left translation $L_\varphi$ as $\sigma_\alpha$ and $\sigma_\beta$ which are the embedding $\Gamma_\alpha \to \Gamma$ and $\Gamma_\beta \to \Gamma$ respectively. Then $\sigma_{\beta\alpha} : \Gamma_\alpha \to \Gamma_\beta$ which satisfies the cocycle condition $\sigma_{\gamma\alpha} = \sigma_{\gamma\beta} \circ \sigma_{\beta\alpha}$ and $\sigma_\beta \circ \sigma_{\beta\alpha} = \sigma_\alpha$ for $\gamma \gg \beta \gg \alpha$.\\ (4) The linear span of $\underset{\alpha \in I}\bigcup \sigma_\alpha(\mathcal{A}_\alpha)$ is $\mathcal{A}$. \\ (5) That the system of Lie subgroupoids $\{(\Gamma_\alpha \rightrightarrows \mathcal{A}_\alpha, \sigma_\alpha), (\sigma_{\beta\alpha})_{\alpha, \beta \in I} : \beta \gg \alpha\}$ is compatible with the algebraic and topological structures of the locally convex partial $^*$-algebra follows from the compatibility of these structures within a topological (Lie) groupoid. \\ Thus, the system is an inductive system of Lie groupoids with the Lie groupoid $\Gamma \rightrightarrows \mathcal{A}$ as its inductive limit.
\end{proof}
\begin{coro}
The inductive system $\{\Gamma_\alpha \rightrightarrows \mathcal{A}_\alpha, \sigma_\alpha, \sigma_{\beta\alpha}, \alpha, \beta \in I : \beta \gg \alpha\}$ is equivalent to the inductive system $\{(\mathcal{A}_\alpha,\varphi_\alpha,\tau_\alpha)_{\alpha \in I},(\varphi_{\beta\alpha})_{\alpha,\beta \in I}:\beta \gg \alpha, \tau\}$ of locally convex partial $^*$-algebras defined in \cite{Ekhaguere2007}, whose inductive limit is the locally convex $^*$-algebra $(\mathcal{A},\Gamma, \cdot,*,\tau)$.
\end{coro}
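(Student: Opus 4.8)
The plan is to promote the correspondence between partial $^*$-algebras and groupoids of the form $\Gamma \rightrightarrows \mathcal{A}$, established in the earlier propositions of Sections 1 and 2, from single objects to inductive systems. First I would note that for each $\alpha \in I$ the Lie subgroupoid $\Gamma_\alpha \rightrightarrows \mathcal{A}_\alpha$ is itself a locally convex partial $^*$-algebra: restrict the partial multiplication and the involution of $\mathcal{A}$ to $\mathcal{A}_\alpha$, put $\Gamma_\alpha = \{(x,y) \in \mathcal{A}_\alpha \times \mathcal{A}_\alpha : x \cdot y \in \mathcal{A}_\alpha\}$, and give $\mathcal{A}_\alpha$ the subspace topology $\tau_\alpha$. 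This object is precisely the $(\mathcal{A}_\alpha, \varphi_\alpha, \tau_\alpha)$ appearing in Ekhaguere's system, with $\varphi_\alpha$ the canonical embedding $\mathcal{A}_\alpha \hookrightarrow \mathcal{A}$.

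Next I would match the connecting maps. On the groupoid side $\sigma_{\beta\alpha} : \Gamma_\alpha \to \Gamma_\beta$ is the restriction of a left translation $L_\varphi$ by a local bisection $\varphi \in \mathcal{B}_\ell(\Gamma)$, which by the Mackenzie-type lemma quoted just above covers a diffeomorphism $f : \mathcal{A}_\alpha \to \mathcal{A}_\beta$ respecting source, target and partial composition. I would verify that this base map is a morphism of partial $^*$-algebras $\varphi_{\beta\alpha}$: linearity follows from the linearity axioms in the definition of a partial algebra, multiplicativity from $\sigma_{\beta\alpha}(\gamma\eta) = \sigma_{\beta\alpha}(\gamma)\eta$ combined with the composition law $(x,y)\circ(y,z) = (x,z)$, and $^*$-compatibility from the identification of the involution with the groupoid inverse $(x,y)^* = (y,x)$, which $L_\varphi$ intertwines. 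The cocycle identities $\sigma_{\gamma\alpha} = \sigma_{\gamma\beta}\circ\sigma_{\beta\alpha}$ and $\sigma_\beta\circ\sigma_{\beta\alpha} = \sigma_\alpha$ then transcribe verbatim into $\varphi_{\gamma\alpha} = \varphi_{\gamma\beta}\circ\varphi_{\beta\alpha}$ and $\varphi_\beta\circ\varphi_{\beta\alpha} = \varphi_\alpha$, so the two inductive systems are isomorphic through objectwise isomorphisms commuting with all structure maps, i.e.\ equivalent in the sense of \cite{Ekhaguere2007}.

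Finally I would identify the limits. The preceding proposition already exhibits $\Gamma \rightrightarrows \mathcal{A}$ as the inductive limit of $\{\Gamma_\alpha \rightrightarrows \mathcal{A}_\alpha\}$ among locally convex Lie groupoids; since an equivalence of inductive systems carries colimit to colimit, the inductive limit of Ekhaguere's system is the partial $^*$-algebra attached to $\Gamma \rightrightarrows \mathcal{A}$, which by the locally convex correspondence is $(\mathcal{A}, \Gamma, \cdot, *, \tau)$. The density condition (4) of the preceding proposition --- that $\bigcup_{\alpha \in I} \sigma_\alpha(\mathcal{A}_\alpha)$ has linear span $\mathcal{A}$ --- ensures the limit on the algebra side recovers all of $\mathcal{A}$ rather than a proper subspace.

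I expect the genuine obstacle to be the topological part of the limit statement: one must check that the Hausdorff locally convex topology $\tau$ on $\mathcal{A}$ coincides with the inductive-limit topology determined by the $\tau_\alpha$ and the embeddings $\varphi_\alpha$ (an LF-space type compatibility), so that the colimit computed on the groupoid side and the colimit in Ekhaguere's category of locally convex partial $^*$-algebras truly agree and not merely at the algebraic level. Here I would rely on part (5) of the preceding proposition --- compatibility of the algebraic and topological structure inside a topological groupoid --- together with the density condition (4), and on the fact that the $\tau_\alpha$ are already the topologies induced from $\tau$.
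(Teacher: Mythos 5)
Your proposal is correct in outline but takes a genuinely different, and considerably more systematic, route than the paper. The paper's own proof is two sentences: it declares the corollary ``an immediate consequence of the construction,'' observing only that each $\Gamma_\alpha \rightrightarrows \mathcal{A}_\alpha$ is a subgroupoid of the groupoid of pairs $\mathcal{A}\times\mathcal{A}$, and that the pair groupoid on $(\mathcal{A},\tau)$ models the locally convex $^*$-algebra $(\mathcal{A},\Gamma,\cdot,*,\tau)$ --- a top-down containment argument with no verification of how the connecting maps or the limit behave. You instead run the standard ``equivalence of diagrams'' argument: objectwise correspondence $\Gamma_\alpha \rightrightarrows \mathcal{A}_\alpha \leftrightarrow (\mathcal{A}_\alpha,\varphi_\alpha,\tau_\alpha)$, matching of the connecting maps $\sigma_{\beta\alpha} \leftrightarrow \varphi_{\beta\alpha}$ with the cocycle identities transcribed, and then identification of colimits using the density condition (4). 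This buys a proof that actually checks the claimed equivalence of systems rather than asserting it, and it is the argument the paper should have given. Two caveats: the map covered by $\sigma_{\beta\alpha}$ is an embedding $\mathcal{A}_\alpha \hookrightarrow \mathcal{A}_\beta$ (a diffeomorphism onto its image $t(V)$ in the Mackenzie lemma), not a diffeomorphism onto $\mathcal{A}_\beta$, and your claim that linearity of the base map ``follows from the linearity axioms'' deserves an argument, since a left translation by a bisection is not automatically linear. Finally, the topological obstacle you flag at the end --- whether $\tau$ agrees with the inductive-limit topology determined by the $\tau_\alpha$ --- is real and is addressed by neither your sketch nor the paper; both proofs leave the limit statement verified only at the algebraic level.
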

\begin{proof}
The proof of this is an immediate consequence of the construction or formulation of the Lie groupoid $\Gamma \rightrightarrows \mathcal{A}$. For by the formulation, as we have seen above, the Lie groupoid $\Gamma_\alpha \rightrightarrows \mathcal{A}_\alpha$ is always a subgroupoid of the Lie groupoid of pairs $\mathcal{G}$ where $\Gamma = \mathcal{A} \times \mathcal{A}$. Thus, the Lie groupoid of pairs $\mathcal{G} \rightrightarrows \mathcal{A}$ on the locally convex topological space $\mathcal{A}$ models a locally convex $^*$-algebra $(\mathcal{A},\Gamma, \cdot, *, \tau)$.
\end{proof}
\section{The Representations of the Lie Groupoid}
Given the smooth system of Haar measures $\{\mu^x\}_{x \in \mathcal{A}}$ supported on the $t$-fibres $\Gamma(x,-)$ as described above, each Haar measure is associated to a Hilbert space $H_x = L^2(\Gamma(x,-), \mu^x)$ such that each arrow $\gamma \in \Gamma(t(\gamma),-)$ defines a unitary operator $\ell(\gamma) : H_{s(\gamma)} \to H_{t(\gamma)}$. This gives rise to a unitary representation of the Lie groupoid $\Gamma \rightrightarrows \mathcal{A}$ on the space $\mathcal{U}(\mathcal{H})$ of unitary operators on the Hilbert bundle $\mathcal{H} = \{H_x\}_{x \in \mathcal{A}}$. The unitary representation is then used in the definition of the $C^*$-representation of the groupoid convolution algebra $C(\Gamma)$, which is a representation defined by the (densities) map $\gamma \mapsto f(\gamma)\ell(\gamma)$ over $\Gamma(x,-)$ for each $x \in \mathcal{A}, f \in C_c(\Gamma)$ with respect to the Haar measure $\mu^x$.

The unitary representation $\ell$ of the Lie groupoid $\Gamma \rightrightarrows \mathcal{A}$ and the $C^*$-representation of its convolution algebra on the Hilbert space bundle $\mathcal{H} = L^2(\mathcal{A},\{H_x\},\nu)$ follow directly on the above formulations. As we have noted already, given the probability measure $\nu$ which is quasi-invariant on $\mathcal{A}$, it defines the quasi-invariant measures $m, m^2$ and their inverses $m^{-1},(m^2)^{-1}$ on $\Gamma, \Gamma^{(2)}$ respectively, and satisfy the requirements for measurability of inversion and product. The definitions of these associated measures $m, m^{-1}, m^2, m_o$ to $\nu$ and $\{\mu^x\}_{x \in \mathcal{A}}$ are given in \cite{Paterson99}. So, following Paterson, we define the representation of the Lie groupoid $\Gamma \rightrightarrows \mathcal{A}$ as follows.
\begin{defx}
A representation of the locally convex groupoid $\Gamma \rightrightarrows \mathcal{A}$ is defined by a Hilbert bundle $(\mathcal{A},\{H_x\},\nu)$ where $\nu$ is a quasi-invariant measure on $\mathcal{A}$ to which the measures $m, m^{-1}, m^2, m_o$ are associated; and for each $\gamma \in \Gamma$, there is a unitary element $\ell(\gamma) : H_{s(\gamma)} \to H_{t(\gamma)}$ such that \\ (i) $\ell(x)$ is the identity map on $H_x$ for all $x \in \mathcal{A}$; \\ (ii) $\ell(\gamma_1\gamma_2) = \ell(\gamma_1)\ell(\gamma_2)$ for $m^2$-a.e. $(\gamma_1,\gamma_2) \in \Gamma^2$; \\ (iii) $\ell(\gamma)^{-1} = \ell(\gamma^{-1})$ for $m$-a.e. $\gamma \in \Gamma$; \\ (iv) for any $\xi, \eta \in L^2(\mathcal{A},\{H_x\},\nu)$, the function \[\gamma \mapsto \langle \ell(\gamma)\xi(s(\gamma)), \eta(t(\gamma))\rangle \eqno{(9)} \] is $m$-measurable on $\Gamma$.
\end{defx}
The inner product is defined since $\xi(s(\gamma)) \in H_{s(\gamma)}$ and translated by $\ell(\gamma)$ to $\ell(\gamma)\xi(s(\gamma)) \in H_{t(\gamma)}$, and $\eta(t(\gamma)) \in H_{t(\gamma)}$. So the inner product is on the fibre $H_{t(\gamma)}$. The representation is given as a triple $(\nu,\mathcal{H},\ell)$. The identification between an arrow and a local bisection $\gamma \leftrightarrow \varphi$ means that we can use either for the representation. These give rise to the \emph{trivial} and the \emph{left regular representations} of the Lie groupoid given as follows. (cf. \cite{Paterson99}, p.93).

\subsection{Trivial and Left Regular Representations}
Working on the field of real numbers, we leave aside the complex trivial representation and focus on the real. We replace the complex numbers $\mathbb{C}$ with the ordered space or connected component $[0,1]$ in the trivial representation $\ell_t : \mathbb{C}_{s(\gamma)} \to \mathbb{C}_{t(\gamma)}$, in which $\ell_t$ is an identity map on $\mathbb{C}$. In this case, the natural or trivial representation  $\ell_t$ is on the trivial bundle $\mathcal{A} \times [0,1]$ over $\mathcal{A}$. Thus, each fibre $H_x$ is again 1-dimensional Hilbert space $[0,1]_{x \in \mathcal{A}} \simeq \mathcal{A} \times \mathscr{K}$. Hence, $\ell_t(\gamma) = 1 \in [0,1]$ for all $\gamma \in \Gamma$; making $\ell_t : [0,1]_{s(\gamma)} \to [0,1]_{t(\gamma)}$ an identity map on $[0,1]$.

The second natural representation is the left regular $\ell_r$ representation which is built on the trivial. It is defined on the fibres $H_x = L^2(\Gamma(x,-))$ (see \cite{Paterson99}, p.107; \cite{Renault80}, p.55). The convolution algebra $C_c(\Gamma)$ is made a space of continuous (smooth) sections of $\{H_x\}$ by identification of each $\varphi \in C_c(\Gamma)$ with the section $x \to \varphi|_{\Gamma(x,-)} \in C_c(\Gamma(x,-) \subset L^2(\Gamma(x,-))$. Hence, according to definition, any pair of sections $\varphi,\psi \in C_c(\Gamma)$ is required to define a $\nu$-measurable map by inner product $\gamma \mapsto \langle \varphi(\gamma), \psi(\gamma) \rangle$. This is satisfied since $\varphi\overline{\psi} \in B(\mathcal{H})$-bounded operators-and the restriction $(\varphi\overline{\psi})|_\mathcal{A} = (\varphi\overline{\psi})^o \in B_c(\mathcal{A})$.

The generation of the Hilbert bundle from these sections is shown as follows. From our construction and definition of the Lie groupoid $\Gamma \rightrightarrows \mathcal{A}$, and following also from the definition of locally compact groupoid in (\cite{Paterson99}, Definition 2.2.1), there is a countable family $\mathcal{C}$ of compact (convex) Hausdorff subsets of $\Gamma$ such that the family $\{C^o, C \in \mathcal{C}\}$ of interiors of $\mathcal{C}$ is a basis for the topology of $\Gamma$ (we have used or defined this open basis $\mathscr{U}$ of locally convex topology of $\Gamma$ above as $N_x \subset \Gamma(x,-)$.) For $C \in \mathcal{C}$, there exists a sup-norm dense countable subset $A_C$ (of sections) of $C_c(C^o)$ corresponding to $C \in \mathcal{C}$; a fundamental net of sections is given by the sums of the functions in $A_C \subset C_c(C^o)$ as $C$ ranges over the family $\mathcal{C}$ (an ultrafilter), which is same as the set of images of all the local bisections $B_\ell(\mathcal{G})$.

Subsequently, $\{H_x\}$ is a Hilbert bundle, and and the map $\ell_r(\gamma) : H_{s(\gamma)} \to H_{t(\gamma)}$ defined by $(\ell_r(\gamma))(f)(\gamma_1) = f(\gamma^{-1}\gamma_1)$, with $f \in L^2(\Gamma(s(\gamma),-))$ and $\gamma_1 \in \Gamma(t(\gamma),-)$, is a unitary representation given as follows (see \cite{Paterson99}). First, $\ell_r(\gamma)$ is an extension of a bijective isometry \[L^1(\Gamma(s(\gamma),-),\mu^{s(\gamma)}) \to L^1(\Gamma(t(\gamma),-), \mu^{t(\gamma)}), f \mapsto \gamma * f; \eqno{(10)}\] in the sense that $L^2(\Gamma(s(\gamma),-),\mu^{s(\gamma)}) \subset L^1(\Gamma(s(\gamma),-),\mu^{s(\gamma)})$; and it defines a translative (transitive) action of $\Gamma$ on $t$-fibres.

Second, the restriction of the $\ell_r$ to $L^2(\Gamma(s(\gamma),-),\mu^{s(\gamma)})$ is a representation of $\gamma \in \Gamma$ as a unitary operator $\gamma * f$ given as \[ H_{s(\gamma)} \to H_{t(\gamma)}, (\gamma * f)(\gamma_1) = f(\gamma^{-1}\gamma_1) = f(\gamma^{-1})f(\gamma_1). \eqno{(11)}\]  The restriction holds for $1 < p \leq \infty$ (see \cite{Paterson99}, p.34). So, $\ell_r$ is a (unitary) representation of $\Gamma \rightrightarrows \mathcal{A}$ on the Hilbert bundle $(\mathcal{A},\mathcal{H},\nu)$ for it satisfies the other conditions of definition.
\begin{rem}
The translative action of $\Gamma$ on the $t$-fibres which are open subspaces of the locally convex Lie groupoid $\Gamma$ can also be given by the action of the local bisections of the Lie groupoid, which are always diffeomorphic to the open subspaces of a Lie groupoid; that is, $L \longleftrightarrow \varphi$, where $L$ is an open subspace of $\Gamma$ and $\varphi$ is a local bisection of $\Gamma$.
\end{rem}

\subsection{The $C^*$-Representation of the Convolution Algebra}
The unitary representation of the Lie groupoid $\Gamma \rightrightarrows \mathcal{A}$ on $\mathcal{U}(\mathcal{H})$-space of unitary operators on the Hilbert bundle $\mathcal{H}$ comprising of the Hilbert spaces $(L^2(\Gamma(t(\gamma),-), \mu^{t(\gamma)})$-is connected to $^*$-representation of the convolution algebra $C_c(\Gamma)$ on the space of operators on the same bundle space $\mathcal{H}$. The definition of the $^*$-representation presupposes the definition of involution or $I$-norm, the role of which is to keep the involution isometric on the convolution algebra $C_c(\Gamma)$. (See also \cite{Renault80}, p.51).
\begin{defx}\cite{Paterson99}
Given the locally convex (compact) groupoid $\Gamma \rightrightarrows \mathcal{A}$, with $C_c(\Gamma)$ as the space of continuous functions on $\Gamma$ supported on compact sets, the following norms are defined on $C_c(\Gamma)$ as follows.
\[ ||f||_{I,t} = \underset{x \in \mathcal{A}}\sup \int_{\Gamma(x,-)}|f(\gamma)|d\mu^x(\gamma); \;   ||f||_{I,s} = \underset{x \in \mathcal{A}}\sup \int_{\Gamma(-,x)}|f(\gamma)|d\mu_x(\gamma) \eqno{(11)}\] \[ ||f||_I = \max \{||f||_{I,t}, ||f||_{I,s} \} \; \text{is the I-norm}. \]
\end{defx}
 We now modify Paterson's formulations to suit our locally convex Lie groupoid $\Gamma \rightrightarrows \mathcal{A}$ as follows.
\begin{prop}
Let $C$ be a compact (convex) subset of $\Gamma$ and $\{f_\alpha\}$ be a net in $C_c(\Gamma)$ such that every $f_\alpha$ vanishes outside $C$; that is, $\alpha < dist((x,x), \partial C)$. Suppose that $f_\alpha \to f$ uniformly in $C_c(\Gamma)$. Then $f_\alpha \to f$ in the $I$-norm of $C_c(\Gamma)$.
\end{prop}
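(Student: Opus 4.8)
The plan is to reduce the statement to a single uniform estimate on Haar measures of a fixed compact set. Put $g_\alpha := f_\alpha - f$. Each $f_\alpha$ vanishes outside the compact — hence closed — set $C$, and since $f$ is the uniform limit of the $f_\alpha$, it too vanishes outside $C$; thus every $g_\alpha \in C_c(\Gamma)$ is supported in $C$ and $\|g_\alpha\|_\infty \to 0$. It therefore suffices to establish a constant $M_C < \infty$, depending only on $C$, such that
\[
\|g\|_I \;\le\; M_C\,\|g\|_\infty \qquad \text{for every } g \in C_c(\Gamma) \text{ supported in } C,
\]
for then $\|f_\alpha - f\|_I \le M_C\|f_\alpha - f\|_\infty \to 0$, which is the claim.

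To produce $M_C$ I would invoke the continuity axiom of the smooth left Haar system $\{\mu^x\}_{x \in \mathcal{A}}$ constructed above, namely that $h_o(x) := \int_{\Gamma(x,-)} h\, d\mu^x$ lies in $C_c(\mathcal{A})$ for every $h \in C_c(\Gamma)$, together with its right-handed counterpart for $\{\mu_x\}_{x \in \mathcal{A}}$ on the $s$-fibres $\Gamma(-,x)$. Using the Lie-groupoid (smooth manifold) structure of $\Gamma$ over $\mathcal{A}$ established in the previous sections, pick a single $h \in C_c(\Gamma)$ with $0 \le h \le 1$ and $h \equiv 1$ on $C$ (a Urysohn / bump function). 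Then for any $g$ supported in $C$ and any $x \in \mathcal{A}$,
\[
\int_{\Gamma(x,-)} |g(\gamma)|\, d\mu^x(\gamma) \;\le\; \|g\|_\infty \int_{\Gamma(x,-)} h\, d\mu^x \;=\; \|g\|_\infty\, h_o(x),
\]
so $\|g\|_{I,t} \le \|g\|_\infty \sup_{x\in\mathcal{A}} h_o(x) = M_t\|g\|_\infty$ with $M_t < \infty$ because $h_o \in C_c(\mathcal{A})$. The identical computation on the $s$-fibres with the same $h$ gives $\|g\|_{I,s} \le M_s\|g\|_\infty$, where $M_s := \sup_{x\in\mathcal{A}} \int_{\Gamma(-,x)} h\, d\mu_x < \infty$. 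Since $\|g\|_I = \max\{\|g\|_{I,t}, \|g\|_{I,s}\}$, setting $M_C := \max\{M_t, M_s\}$ completes the reduction.

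The only genuinely delicate point — and the one I expect to be the main obstacle — is the availability of the cutoff $h \in C_c(\Gamma)$ with $h \equiv 1$ on the compact set $C$ and the admissibility of feeding it into the Haar-system axiom; this is precisely where the earlier work pays off, since it shows $\Gamma \rightrightarrows \mathcal{A}$ is a smooth manifold modelled on the locally convex space $\mathcal{A}$ on which the finite-dimensional $t$-fibres carry such bump functions and on which $h \mapsto h_o$ is continuous into $C_c(\mathcal{A})$. Everything else is routine: that $f$ is supported in $C$, that each $g_\alpha$ is, and the pointwise bound $|g| \le \|g\|_\infty\, h$ on $C$. It is also worth remarking that the hypothesis written as ``$\alpha < \mathrm{dist}((x,x),\partial C)$'' is merely the concrete device by which the net is arranged so that its tail is supported inside $C$, which is all that the convergence assertion requires.
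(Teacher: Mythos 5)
Your proposal is correct and follows essentially the same route as the paper: both arguments dominate the indicator of $C$ by a fixed nonnegative function $F \in C_c(\Gamma)$ with $F \ge \chi_C$ (the paper builds it as a finite sum $F=\sum F_i$ over a finite subcover of $C$, you invoke a Urysohn bump directly), and then use axiom (ii) of the Haar system to get the uniform bound $\|f_\alpha - f\|_{I,t} \le \|f_\alpha - f\|_\infty\,\sup_{x}\int_{\Gamma(x,-)}F\,d\mu^x \to 0$, and likewise for the $s$-fibres. Your extra observation that $f$ itself is supported in $C$ is a point the paper leaves implicit but uses.
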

\begin{proof}
Using the countable family $\mathcal{C}$ of compact (convex) Hausdorff subsets of $\Gamma$ as given above, the compactness of $\mathcal{C}$ implies an open covering $U_1, \cdots, U_n$ of $C$ by Hausdorff subsets of open Hausdorff sets $V_1, \cdots, V_n$ (take these to be the images of local bisections $B_\ell(\Gamma)$) such that the closure of each $U_i$ in $V_i$ is compact. Let $F_i \in C_c(V_i)$ be such that $F_i \geq \chi_{U_i}$. In particular, $F_i$ is positive. Let $F = \overset{n}{\underset{i=1}\sum}F_i$. (If we use the net of bisections, then $F = \underset{\alpha}\sum F_\alpha$.) Then $F \in C_c(\Gamma)$ and $F \geq \chi_C$. Hence $|f_\alpha - f| \leq |f_\alpha - f|F$, and we have \[||f_\alpha - f||_{I,t} = \underset{x \in \mathcal{A}}\sup \int_{\Gamma(x,-)}|f_\alpha(t) - f(t)|d\mu^x(\gamma) \] \[ \leq \underset{x \in \mathcal{A}}\sup \int_{\Gamma(x,-)}|f_\alpha(t) - f(t)|F(\gamma)d\mu^x(\gamma) \] \[ = ||f_\alpha - f||_\infty||F^o||_\infty \to 0, \; \text{as } \alpha \to 0. \]
Similarly, $||f_\alpha - f||_{I,s} \to 0$, and so the same for the $I$-norm.
\end{proof}
So $C_c(\Gamma)$ is a normed $^*$-algebra under $I$-norm, with a $I$-norm continuous representation on a Hilbert space. The separable normed $^*$-algebra $C_c(\Gamma)$ generates separable $C^*$-algebras on the separable Hilbert bundle $\mathcal{H}$. The bundle space $\mathcal{H}$ is separable because the fibres are separable Hilbert spaces. With this it follows that $\pi_\ell : C_c(\Gamma) \to B(\mathcal{H})$ defined as \[ \langle \pi_\ell(f)\xi, \eta \rangle = \int_{\Gamma}f(\gamma)\langle \ell(\gamma)(\varphi(s(\gamma))),\psi(t(\gamma))\rangle dm_o(\gamma), \eqno{(12)} \] is a $^*$-representation of the convolution algebra of the Lie groupoid. This follows from Paterson's statement and proof of the results of Renault on $^*$-representation of Lie groupoids in \cite{Paterson99}.
\begin{prop}(cf. \cite{Paterson99},Proposition 3.1.1)
The equation \[\langle \pi_\ell(f)\varphi, \psi \rangle = \int_{\Gamma}f(\gamma)\langle \ell(\gamma)(\varphi(s(\gamma))), \psi(t(\gamma))\rangle dm_o(\gamma) \] defines a representation $\pi_\ell$ of $C_c(\Gamma)$ of norm $\leq 1$ on the bundle $\mathcal{H} = L^2(\mathcal{A},\{H_x\},\nu)$.
\end{prop}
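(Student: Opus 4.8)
The plan is to follow Renault's argument as presented in \cite{Paterson99} (Proposition 3.1.1), adapted to the locally convex Lie groupoid $\Gamma \rightrightarrows \mathcal{A}$ constructed above, proceeding in four steps: well-definedness of the integral, the boundedness estimate $\|\pi_\ell(f)\| \le \|f\|_I$ (which yields the asserted norm bound on $\pi_\ell$), multiplicativity, and compatibility with the involution.

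First I would check that for $f \in C_c(\Gamma)$ and $\varphi, \psi \in \mathcal{H} = L^2(\mathcal{A},\{H_x\},\nu)$ the integrand $\gamma \mapsto f(\gamma)\langle \ell(\gamma)\varphi(s(\gamma)), \psi(t(\gamma))\rangle$ is $m_o$-integrable. This uses condition (iv) of the definition of a representation --- measurability on $\Gamma$ of $\gamma \mapsto \langle \ell(\gamma)\varphi(s(\gamma)), \psi(t(\gamma))\rangle$ --- together with the fact that $f$ is bounded of compact support and that $m_o$ is the Radon measure built from $\nu$ and $\{\mu^x\}$; the Cauchy-Schwarz bound $|\langle \ell(\gamma)\varphi(s(\gamma)), \psi(t(\gamma))\rangle| \le \|\varphi(s(\gamma))\|\,\|\psi(t(\gamma))\|$ reduces integrability to that of $|f|$ against $m_o$, which holds by construction. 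Sesquilinearity in $(\varphi,\psi)$ is immediate, so the equation defines a bounded sesquilinear form once the estimate below is in hand, hence a unique operator $\pi_\ell(f) \in B(\mathcal{H})$.

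The heart of the argument is the estimate $\|\pi_\ell(f)\| \le \|f\|_I$. I would split $|f(\gamma)| = |f(\gamma)|^{1/2}\,|f(\gamma)|^{1/2}$ inside the integral against $dm_o(\gamma) = d\mu^x(\gamma)\,d\nu(x)$ and apply Cauchy-Schwarz, using the quasi-invariance of $\nu$ --- equivalently, the modular function relating $m$ and $m^{-1}$, i.e. the symmetry built into $m_o$ --- to pass between integration over the $t$-fibres $\Gamma(x,-)$ and the $s$-fibres $\Gamma(-,x)$. One resulting factor is controlled by $\sup_{x} \int_{\Gamma(x,-)} |f|\,d\mu^x = \|f\|_{I,t}$ times $\|\psi\|_2^2$, and the other by $\|f\|_{I,s}$ times $\|\varphi\|_2^2$, using that each $\ell(\gamma)$ is unitary and hence preserves fibre norms. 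Taking the geometric mean gives $|\langle \pi_\ell(f)\varphi,\psi\rangle| \le \|f\|_I\,\|\varphi\|_2\,\|\psi\|_2$, so $\|\pi_\ell(f)\| \le \|f\|_I$ and therefore $\|\pi_\ell\| \le 1$ as a map from $(C_c(\Gamma),\|\cdot\|_I)$ to $B(\mathcal{H})$. I expect this modular-function bookkeeping to be the main technical obstacle, since it is precisely where the quasi-invariance of $\nu$ and the normalisation of $m_o$ enter; the $\tfrac12$-density formalism and the smooth net $\mathscr{K}$-action from the previous section are what keep the relevant Radon-Nikodym derivatives smooth and strictly positive, legitimising the manipulations.

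Finally I would verify that $\pi_\ell$ is a $^*$-homomorphism. Multiplicativity $\pi_\ell(f * g) = \pi_\ell(f)\pi_\ell(g)$ follows by writing $\langle \pi_\ell(f*g)\varphi,\psi\rangle$ out with the convolution formula (6), applying Fubini (justified by the integrability established above) to disintegrate the double integral over $\Gamma^{(2)}$ via $m^2$, and invoking the cocycle identity $\ell(\gamma_1\gamma_2) = \ell(\gamma_1)\ell(\gamma_2)$ valid $m^2$-a.e. by condition (ii); a change of variables along the fibre then identifies the result with $\langle \pi_\ell(f)\pi_\ell(g)\varphi,\psi\rangle$. Compatibility with the involution, $\pi_\ell(f^*) = \pi_\ell(f)^*$ for the involution (7), follows from $\ell(\gamma^{-1}) = \ell(\gamma)^{-1} = \ell(\gamma)^*$ (conditions (i) and (iii) together with unitarity) and the invariance of $m_o$ under $\gamma \mapsto \gamma^{-1}$, after a change of variables in the defining integral. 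Since $\pi_\ell(f) \in B(\mathcal{H})$ for every $f$ and $C_c(\Gamma)$ is a normed $^*$-algebra under the $I$-norm by the preceding proposition, these four facts together show that $\pi_\ell$ is a $^*$-representation of $C_c(\Gamma)$ on $\mathcal{H}$ of norm $\le 1$, as claimed.
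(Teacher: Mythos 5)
Your proposal is correct and is essentially the intended argument: the paper supplies no proof of its own here, deferring entirely to Paterson's Proposition 3.1.1 (Renault's result), and your four steps --- integrability via condition (iv) and Cauchy--Schwarz, the $\|\pi_\ell(f)\|\le\|f\|_I$ estimate by splitting $|f|^{1/2}\cdot|f|^{1/2}$ and using the modular symmetry of $m_o$, multiplicativity via Fubini and the $m^2$-a.e.\ cocycle identity, and $\pi_\ell(f^*)=\pi_\ell(f)^*$ from the inversion-invariance of $m_o$ --- reconstruct exactly that cited proof. The only caveat is notational: $m_o$ is not literally $d\mu^x\,d\nu$ but the symmetrized measure $\Delta^{-1/2}m$ with $m=\int\mu^x\,d\nu$, which you in effect acknowledge when you invoke the modular function, so this does not affect the argument.
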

\begin{rem}
From the result we deduce a relationship between the two representations; the unitary representation of the Lie groupoid $\Gamma \rightrightarrows \mathcal{A}$ on the Hilbert bundle $\mathcal{H} = L^2(\mathcal{A},\{H_x\},\nu)$ and the $C^*$-representation of its convolution algebra $C^\infty(\Gamma,\Omega(\Gamma)) \simeq C^\infty(\mathcal{A})|_\Gamma$ on a dense subspace of the $C^*$-algebra $B(\mathcal{H})$ of bounded operators on the Hilbert bundle $\mathcal{H}$. This relationship, according to Paterson, is basic to the fundamental theorem of analysis on locally compact groupoids. It has to do with the fact that every representation of the convolution algebra $C_c(\mathcal{G})$ of a locally compact groupoid $\mathcal{G}$ is some $^*$-representation $\pi_\ell$ of the unitary representation $\ell$ of the groupoid.

Thus, the $C^*$-representation of the convolution algebra $C^\infty(\Gamma,\Omega(\Gamma))$ on a dense subspace of the $C^*$-algebra $B(\mathcal{H})$ of bounded operators on the Hilbert bundle $\mathcal{H}$ is the $^*$-representation $\pi_\ell$ of the unitary representation $\ell$ of the locally convex groupoid $\Gamma \rightrightarrows \mathcal{A}$. This is stated in the the following theorem.
\end{rem}
\begin{thm}(Cf. \cite{Paterson99})
Given the locally convex Lie groupoid $\Gamma \rightrightarrows \mathcal{A}$. The representation of $C_c(\Gamma,\Omega(\Gamma))$ is the $^*$-representation $\pi_\ell$ of the unitary representation $\ell :\Gamma \to \mathcal{U}(\mathcal{H})$; and the correspondence $\ell \mapsto \pi_\ell$ preserves the natural equivalence between the two representations.
\end{thm}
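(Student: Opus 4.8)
\emph{Proof strategy.} The plan is to establish the two implications separately and then check that $\ell\mapsto\pi_\ell$ is well defined and injective on unitary-equivalence classes. The easy direction starts from a unitary representation $(\nu,\mathcal{H},\ell)$ of $\Gamma\rightrightarrows\mathcal{A}$ in the sense defined above and sets $\pi_\ell$ to be given by (12). That $\pi_\ell$ is a linear map of norm $\le 1$ into $B(\mathcal{H})$ is precisely the content of the preceding proposition (the analogue of Paterson--Renault 3.1.1), so only multiplicativity and $^\ast$-compatibility remain. For $\pi_\ell(f\ast g)=\pi_\ell(f)\pi_\ell(g)$ I would substitute the convolution formula (6) into (12), apply Fubini to the resulting iterated integral against the smooth Haar system $\{\mu^x\}$ — licit because $f,g\in C^\infty_c(\Gamma,\Omega(\Gamma))$ and the Radon--Nikodym densities $\Phi$ of the system are smooth and strictly positive — and then collapse the double integral using left-invariance (axiom (iii) of the smooth Haar system) together with the cocycle law $\ell(\gamma_1\gamma_2)=\ell(\gamma_1)\ell(\gamma_2)$, valid $m^2$-a.e. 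The identity $\pi_\ell(f^\ast)=\pi_\ell(f)^\ast$ I would obtain from the involution (7), the inversion $i:\Gamma\to\Gamma$, the compatibility of the measures $m,m^{-1},m^2,m_o$ attached to the quasi-invariant $\nu$ (here the $\tfrac12$-density normalisation is what makes the modular factor cancel symmetrically), and unitarity $\ell(\gamma^{-1})=\ell(\gamma)^\ast$. No conceptual difficulty arises here.

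The substantial direction is the converse, which is the locally convex incarnation of Renault's disintegration theorem. Given a nondegenerate $^\ast$-representation $\pi$ of $C^\infty_c(\Gamma,\Omega(\Gamma))$, $I$-norm continuous by the proposition above, I would first exploit the smooth net $\mathscr{K}\simeq[0,1)$: the densities supported on the shrinking convex neighbourhoods $(x,x,\varepsilon)$ of the diagonal form an approximate identity concentrated on the units, since by construction $(x,x,\varepsilon)\to(x,x)$ as $\varepsilon\to 0$, so $\pi$ restricts to a nondegenerate representation of $C_c(\mathcal{A})$. A direct-integral decomposition over the image of this restriction — legitimate because $\mathcal{H}$ and $C^\infty_c(\Gamma,\Omega(\Gamma))$ are separable and the fundamental net of local bisections $B_\ell(\mathcal{G})$ furnishes the measurable structure — produces a quasi-invariant measure $\nu$ on $\mathcal{A}$ and a measurable field $x\mapsto H_x$ with $\mathcal{H}\cong L^2(\mathcal{A},\{H_x\},\nu)$. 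Decomposing $\pi(f)$ along this field and using that the convolution product is assembled from the left translations $L_\gamma$ (the $L_\varphi$-propositions above), I would read off, for each $\gamma$, a partial isometry $\ell(\gamma):H_{s(\gamma)}\to H_{t(\gamma)}$; quasi-invariance of $\nu$, i.e. the Radon--Nikodym cocycle between $m$ and $m^{-1}$, upgrades it to a unitary. Comparing with (12) on the dense set $C^\infty_c(\Gamma,\Omega(\Gamma))$ then yields $\pi=\pi_\ell$.

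The hard part will be the standard null-set surgery: the relations $\ell(x)=\mathrm{id}$, $\ell(\gamma_1\gamma_2)=\ell(\gamma_1)\ell(\gamma_2)$ and $\ell(\gamma^{-1})=\ell(\gamma)^{-1}$ emerge only modulo $m$- and $m^2$-null sets, and one must pass to a conull invariant subset and re-choose representatives so that they hold identically — Renault's ``inductive groupoid'' lemma — while checking that this surgery is compatible with the locally convex and smooth structure. That compatibility holds because the Haar system is smooth and the countable family $\mathcal{C}$ of compact convex subsets is a basis for the topology of $\Gamma$, so the measure-theoretic clean-up does not disturb the smooth charts (1) and the trivialisations $t(N_x)\times W$ used above.

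Finally, for the equivalence statement: a measurable field of intertwiners $U_x:H^{(1)}_x\to H^{(2)}_x$ witnessing $\ell^{(1)}\simeq\ell^{(2)}$ integrates, via (12), to an intertwiner $\int^{\oplus}U_x\,d\nu$ of $\pi_{\ell^{(1)}}$ and $\pi_{\ell^{(2)}}$; conversely any intertwiner of $\pi_{\ell^{(1)}}$ and $\pi_{\ell^{(2)}}$ commutes with $\pi(C_c(\mathcal{A}))$, hence decomposes along the field, and the convolution-compatibility together with the same null-set clean-up forces its components to intertwine the $\ell^{(i)}(\gamma)$. Thus $\ell\mapsto\pi_\ell$ is a bijection on unitary-equivalence classes, which is the asserted preservation of natural equivalence, and the proof is complete once the disintegration and the inductive-groupoid step are carried out in full.
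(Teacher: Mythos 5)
Your strategy is correct in outline, but it is a genuinely different route from the one the paper takes. You follow the classical Renault--Paterson disintegration argument: the easy direction (from $\ell$ to $\pi_\ell$) via Fubini against the smooth Haar system, left invariance and the a.e.\ cocycle law; the hard converse via an approximate identity concentrated on the units, a direct-integral decomposition of $\mathcal{H}$ over a quasi-invariant $\nu$, extraction of the unitaries $\ell(\gamma)$ from the left translations, and the null-set surgery; and finally the correspondence of intertwiners. The paper instead argues structurally: it invokes the $(\Xi,\Gamma)$-equivalence of the $t$-fibres $\Gamma(x,-)$, where $\Xi=\bigsqcup_{x}\Gamma(x,x)$ is the isotropy bundle, asserts that the $\Xi$-action and the $\Gamma$-action on these fibres commute with one action ``defining'' the unitary representation and the other the $^*$-representation, and then completes the argument with the subsequent lemma, which appeals to the Muhly--Renault--Williams theorem to obtain $B(\mathcal{H})\simeq C^*(\Xi)\otimes\mathcal{K}(L^2(\mathcal{A},\nu))$. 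The trade-off: your route, once the disintegration and inductive-groupoid steps are carried out, actually constructs $\ell$ from a given representation of the convolution algebra and proves the bijection on equivalence classes, which is the full content of the theorem; the paper's route yields a stronger-looking structural statement about the $C^*$-algebras via Morita equivalence but never constructs the correspondence $\ell\mapsto\pi_\ell$ itself, deferring all the analytic substance to the cited literature. In that sense your proposal is closer to a complete proof than the paper's own argument, though you should be explicit that the disintegration theorem you are invoking is exactly Renault's (Paterson, Theorem 3.1.1 and its converse) and that its hypotheses --- second countability, the $I$-norm bound, nondegeneracy --- are supplied by the preceding propositions of the paper.
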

\begin{proof}
This natural equivalence rests on the idea of groupoid equivalence which captures the partial symmetry encoded by the locally convex Lie groupoid $\Gamma \rightrightarrows \mathcal{A}$. This partial symmetry is portrayed or captured in the $(\Xi,\Gamma)$-equivalence of the $t$-fibres $\Gamma(x,-)$. The isomorphism of the two algebras $C_c(\Gamma,\Omega(\Gamma)) \simeq C^\infty(\mathcal{A})|_\Gamma$ brings this partial symmetry to the fore. For it shows that the partial symmetry of the convolution algebra $C_c(\Gamma,\Omega(\Gamma))$ defined on the arrows of the Lie groupoid is isomorphic or same as the partial symmetry of the smooth algebra defined on $\mathcal{A}$ but restricted to the relation $\Gamma$ on $\mathcal{A}$.

Given that the $t$-fibre $\Gamma(x,-)$ (or the $s$-fibre $\Gamma(-,x)$) is the orbit of the relation $\Gamma$ through $x$, it constitutes a representation space of both the smooth algebra and the convolution algebra of the Lie groupoid $\Gamma \rightrightarrows \mathcal{A}$. The $(\Xi,\Gamma)$-equivalence implies that the two actions, $\Xi$-action and $\Gamma$-action commute; while the former defines the unitary representation, the latter defines the $^*$-representation. This gives rise to the natural equivalence $\ell \leftrightarrow \pi_\ell$ of the two representations.
\end{proof}
The following lemma on the groupoid equivalence clarifies this natural equivalence and completes the proof of the theorem. It also extends the above result on the structure of the Lie groupoid we have modelled on the locally convex partial $^*$-algebra.
\begin{lem}
Given that $\Xi = \underset{x \in \mathcal{A}}\bigsqcup \Gamma(x,x)$ is a Lie groupoid. The $(\Xi, \Gamma)$-equivalence of the $t$-fibres $\Gamma(x,-)$ gives rise to a $C^*$-isomorphism.
\end{lem}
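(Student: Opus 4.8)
The plan is to realise the asserted $C^*$-isomorphism as the $C^*$-algebraic shadow of the groupoid equivalence, fibre by fibre over the orbit space of $\mathcal{A}$, and then to invoke the equivalence theorem for (Lie) groupoid convolution algebras (cf. \cite{Paterson99,Renault80}). First I would exhibit, for each $x \in \mathcal{A}$, the $t$-fibre $Z_x := \Gamma(x,-) = t^{-1}(x)$ as a $(\Gamma(x,x),\,\Gamma|_{[x]})$-equivalence: the isotropy group $\Gamma(x,x)\subset\Xi$ acts on the left of $Z_x$ by partial multiplication (which preserves the target $x$), while the restriction $\Gamma|_{[x]}$ of the Lie groupoid to the orbit $[x]$ of $x$ acts on the right of $Z_x$, again by partial multiplication. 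Associativity of $\cdot$ gives that the two actions commute; each is free, since a fixed arrow forces the acting element to be a unit; each is proper, because $s$ and $t$ are submersions of the Lie groupoid modelled on $(\mathcal{A},\tau)$ and $t$ is open by the $t$-fibrewise product structure; and the anchor maps identify $\Gamma(x,x)\backslash Z_x$ with $[x]$ (the unit space of $\Gamma|_{[x]}$) and $Z_x/\Gamma|_{[x]}$ with the point $\{x\}$ (the unit space of $\Gamma(x,x)$). Taking the disjoint union over a transversal to the orbits $\mathcal{A}/\Gamma$ assembles the $Z_x$ into the global $(\Xi,\Gamma)$-equivalence $Z=\bigsqcup_x Z_x$, with all structure maps smooth by the Lie groupoid structure and the constructions of Sections 2 and 3.

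Next I would transport this equivalence to the convolution algebras. By the equivalence theorem, $Z$ produces a pre-imprimitivity bimodule, namely the space $C^\infty_c(Z,\Omega^{1/2})$ of compactly supported half-densities along the $t$-fibres, whose left $\Gamma$-valued and right $\Xi$-valued inner products are given by the convolution formula (6) and the involution (7) restricted to the two actions; its completion is an imprimitivity bimodule between the groupoid $C^*$-algebra $C^*(\Gamma,\Omega(\Gamma))$ and $C^*(\Xi)$. The half-density normalisation makes this bimodule independent of the choice of smooth left Haar system $\{\mu^x\}$ --- this is exactly the content of the proposition on the smoothing $\mathscr{K}\simeq[0,1)$-action on $C^\infty(\Gamma,\Omega(\Gamma))$ --- so the bimodule is canonical. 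Because the $\Xi$-action sees only the isotropy and its orbit space is already $\mathcal{A}$, the algebra $C^*(\Xi)$ is canonically the smooth algebra $C^\infty(\mathcal{A})|_\Gamma$ on $\mathcal{A}$ restricted to the relation $\Gamma$ that appears in the preceding theorem, with the Hilbert bundle $\mathcal{H}=L^2(\mathcal{A},\{H_x\},\nu)$ absorbing the $\mathcal{K}(H_x)$-factors coming from the transitive part $\Gamma|_{[x]}/\Gamma(x,x)$ of each orbit. Under this identification the imprimitivity bimodule becomes the graph of a $^*$-isomorphism $C^*(\Gamma,\Omega(\Gamma))\simeq C^\infty(\mathcal{A})|_\Gamma$, and the functor on representations induced by $Z$ is precisely the correspondence $\ell\mapsto\pi_\ell$ of the theorem, so the natural equivalence of the unitary representation $\ell$ of $\Gamma\rightrightarrows\mathcal{A}$ with the $^*$-representation $\pi_\ell$ of its convolution algebra drops out, completing the theorem.

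I expect the main obstacle to be the passage from a mere (strong) Morita equivalence to a genuine $C^*$-isomorphism, which is where the local-convexity and infinite-dimensional pathology of the partial $^*$-algebra intervenes. Two points need care. First, one must show that the imprimitivity bimodule of $Z$ is singly generated: this uses that $Z$ is principal on both sides with orbit spaces equal to the respective unit spaces, together with the fundamental net $(\varphi_n)$ of local bisections of $\Gamma\rightrightarrows\mathcal{A}$, which by the proposition on fundamental nets spans a dense subspace of each $H_x=L^2(\Gamma(x,-),\mu^x)$ and thus furnishes a full generator absorbing the compact-operator factors into $\mathcal{H}$. Second, and harder, the $t$-fibres $\Gamma(x,-)$ are genuinely infinite-dimensional and not locally compact precisely when $x$ is a (left) multiplier of $\mathcal{A}$ --- the unbounded-operator regime described in the Introduction --- so the equivalence theorem cannot be applied directly; one must instead run the argument on the inductive system of Lie subgroupoids $\{\Gamma_\alpha\rightrightarrows\mathcal{A}_\alpha\}$ of the earlier proposition, verify that the bimodules are compatible with the connecting maps $\sigma_{\beta\alpha}$, and check that the resulting bimodule is compatible with the quasi-invariant measure $\nu$ and the associated measures $m, m^{-1}, m^2, m_o$ on $\Gamma,\Gamma^{(2)}$, so that it closes up in the correct $L^2$-completions. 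Once these are in place, passing to the inductive limit yields the $C^*$-isomorphism, and with it the $(\Xi,\Gamma)$-equivalence of the $t$-fibres.
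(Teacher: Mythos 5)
Your proposal follows the same core route as the paper: both reduce the lemma to the Muhly--Renault--Williams equivalence theorem applied to the $t$-fibres $\Gamma(x,-)$ viewed as $(\Xi,\Gamma)$-equivalences, and both fall back on the inductive system $\{\Gamma_\alpha\rightrightarrows\mathcal{A}_\alpha\}$ to cope with the fact that $\Gamma$ is only transitive on a dense subset of $\mathcal{A}$. The differences are in execution and in the precise object identified. The paper's proof is a three-step sketch --- sectional transitivity, a citation of \cite{MuhRenWil87} for $C^*(\Gamma)\simeq C^*(\Xi)\otimes\mathcal{K}(L^2(\mathcal{A},\nu))$, and the observation that $\mathcal{H}$ is a left $\Xi$-principal bundle --- and it concludes with $B(\mathcal{H})\simeq C^*(\Xi)\otimes\mathcal{K}(L^2(\mathcal{A},\nu))$, which is a stronger (and, taken literally, problematic) claim than what MRW delivers, since the image of $\pi_\ell$ is only a subalgebra of $B(\mathcal{H})$. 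Your version lands on $C^*(\Gamma)\simeq C^*(\Xi)\otimes\mathcal{K}$ via the imprimitivity bimodule of half-densities, which is the form the cited theorem actually gives, so on this point you are more careful than the source. You also make explicit two issues the paper passes over in silence: the gap between strong Morita equivalence and genuine $^*$-isomorphism, and the failure of local compactness of the fibres $\Gamma(x,-)$ in the unbounded regime, which is precisely why the MRW theorem cannot be applied directly and the inductive limit is needed. Neither your proposal nor the paper actually closes these gaps (verifying freeness and properness of the two actions, compatibility of the bimodules with the connecting maps $\sigma_{\beta\alpha}$, and fullness of the generator), but you at least name them; to that extent your outline is a more honest account of what a complete proof would require.
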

\begin{proof}
We have seen above that the sectional transitivity of the Lie groupoid $\Gamma \rightrightarrows \mathcal{A}$ is well reflected in the definition of its convolution algebra $C_c(\Gamma) = C^\infty(\mathcal{A})_\Gamma$, since a convolution follows the product operation between a pair of arrows $\gamma, \eta \in \Gamma$. Thus, an open submanifold $\Gamma(x,-)$ or $\Gamma(-,x)$ reflects a (path) connected maximal proper subset $B$ of the locally convex topological vector space $\mathcal{A}$. The maximality of $B \subset \mathcal{A}$ with respect to path connectedness makes it either open and dense in $\mathcal{A}$ or a closed proper subset of $\mathcal{A}$. This is clear from the inductive system defined above (see also \cite{Ekhaguere2007}).

Subsequently, from the definition of a $t$-fibre $\Gamma(x,-)$ as a $(\Xi, \Gamma)$-equivalence in \cite{MuhRenWil87}, the existence of a special equivalence between the $C^*$-algebras $C^*(\Xi)$ and $C^*(\Gamma)$ was established, which leads to isomorphism between $C^*(\Gamma)$ and $C^*(\Xi)\otimes \mathcal{K}(L^2(\mathcal{A},\nu))$, where $\mathcal{K}$ is the set of compact operators. On the other hand, from the definition of the Hilbert bundle $\mathcal{H} =  \underset{x \in \mathcal{A}}\bigsqcup H_x$, where $H_x = L^2(\Gamma(x,-),\mu^x)$, it is evident that $\mathcal{H}$ is a left $\Xi$-principal bundle.

Finally, from the position of $\Gamma \rightrightarrows \mathcal{A}$ as the limit of the inductive system $\{(\Gamma_\alpha \rightrightarrows \mathcal{A}_\alpha, \sigma_\alpha), (\sigma_{\beta\alpha})_{\alpha,\beta \in I}: \beta \gg \alpha\}$, which implies that $\Gamma$ is transitive on a dense subset of $\mathcal{A}$, we conclude that the $B(\mathcal{H})$ which has the representation $\ell \to \pi_\ell$ is isomorphic to $C^*(\Xi)\otimes \mathcal{K}(L^2(\mathcal{A},\nu))$. That is, $B(\mathcal{H}) \simeq C^*(\Xi)\otimes \mathcal{K}(L^2(\mathcal{A},\nu))$.
\end{proof}
\begin{rem}
This isomorphism depends on the probability measure $\nu$ on $\mathcal{A}$ which is unique to each unit $x \in \mathcal{A}$, and on the density or local bisection $\varphi \in \mathcal{B}_\ell(B_x)$ defined on $x \in \mathcal{A}$, and related to the system of Haar measures. The implication is that the inductive system also extends to the $C^*$-algebras, whereby the unbounded operators are the inductive limit of bounded operators.
\end{rem}
\section{conclusion}
The groupoid characterizations of the partial algebras characterized in \cite{Ekhaguere2007} have helped us to arrive at a clearer understanding of the structures of these partial algebras. Most important is the Lie groupoid characterization of the locally convex partial $^*$-algebras $(\mathcal{A},\Gamma,\cdot, ^*, \tau)$ which clearly demonstrates the facility of (Lie) groupoid framework to handle pathological spaces. This facility is aptly captured in the correspondence between groupoid equivalence and isomorphism of groupoid $C^*$-algebras established in \cite{MuhRenWil87}, which is exemplified in this work in a special way.

The $(\Xi,\Gamma)$-equivalence of the $t$-fibres $\{\Gamma(x,-) : x \in \mathcal{A}\}$ relates to the Lie groupoid  of pairs $\mathcal{G} \rightrightarrows \mathcal{A}$ as the inductive limit of the inductive system of Lie groupoids $\Gamma_\alpha \rightrightarrows \mathcal{A}_\alpha$ given above. Hence, the results show in the case of the locally convex Lie groupoid $\Gamma \rightrightarrows \mathcal{A}$ we have formulated from the locally convex partial $^*$-algebra $(\mathcal{A},\Gamma,\cdot, ^*, \tau)$, that both the equivalence of (Lie) groupoids and that of their $C^*$-algebras follow from the equivariant actions of the smoothing algebra $\mathscr{K}$. Hence, the fact that they are equivariant $\mathscr{K}$-spaces contributes to the correspondence between the two representations and the isomorphism of $C^*$-algebras.

In addition, the smooth equivalence also presents the Lie group bundle $\Xi = \{\Gamma(x,x) : x \in \mathcal{A}\}$ as the deductive limit of a deductive system $\{\Gamma(y,x), y_\varepsilon \to x : \varepsilon \to 0\}$, which implies the convergence of every closed manifold $\Gamma(y,x) \to \Gamma(x,x)$. This could be considered a deformation of the Lie groupoid $\Gamma(\mathcal{A},x)$ to the Lie group $\Gamma(x,x)$ at each unit $x \in \mathcal{A}$ using $\varepsilon \in [0,1)$ as a deformation parameter; and the deformation of the (transitive) Lie groupoid $\Gamma \rightrightarrows \mathcal{A}$ to the Lie group bundle $\Xi$ which is an (intransitive) Lie groupoid. (Cf. \cite{Mackenzie2005}, 1.5.9). The notion of deformation as connected to (Lie) groupoid is treated in \cite{Mackenzie2005}, 1.6.21.

\bibliographystyle{amsplain}

\providecommand{\bysame}{\leavevmode\hbox to3em{\hrulefill}\thinspace}

\end{document}